\renewcommand{\geq}{\geqslant}
\renewcommand{\leq}{\leqslant}
\renewcommand{\ge}{\geqslant}
\renewcommand{\le}{\leqslant}
\let\op=\llbracket
\let\cl=\rrbracket
\def\pv#1{\ensuremath{\mathsf{#1}}}
\def\Om#1#2{\ensuremath{\overline{\Omega}_{#1}{\pv{#2}}}}
\def\oms#1#2{\ensuremath{\Omega^\sigma_{#1}{\pv{#2}}}}
\def\omo#1#2{\ensuremath{\Omega^\omega_{#1}{\pv{#2}}}}
\let\cal=\mathcal
\def\Cl#1{\ensuremath{\cal#1}}
\def\Pol#1{\ensuremath{\mathop{\mathrm{Pol}}\pv{#1}}}
\def\PolL#1{\ensuremath{\mathop{\mathrm{Pol}}\Cl{#1}}}
\def\PolB#1{\ensuremath{\mathop{\mathrm{Pol}\,\mathrm{B}}\pv{#1}}}
\newcommand\malcev{\mathbin{\bigcirc\kern-8.5pt%
\raise1pt\hbox{\footnotesize$m$}\kern1pt}}
\newtheorem{Thm}{Theorem}[section]
\newtheorem{Prop}[Thm]{Proposition}
\newtheorem{Lemma}[Thm]{Lemma}
\newtheorem{Cor}[Thm]{Corollary}
\newenvironment{claim}[1]{\par\vskip5pt plus1pt minus1pt
  \trivlist\item{\textbf{Claim.}}\space#1}{\endtrivlist}
\begin{document}
\title[The omega-inequality problem]{The omega-inequality
  problem for concatenation hierarchies of star-free languages}

\author{J. Almeida}%
\address{CMUP, Dep.\ Matem\'atica, Faculdade de Ci\^encias, Universidade do
  Porto, Rua do Campo Alegre 687, 4169-007 Porto, Portugal}
\email{jalmeida@fc.up.pt}

\author{O. Kl\'ima}%
\author{M. Kunc}%
\address{Dept.\ of Mathematics and Statistics, Masaryk University,
  Kotl\'a\v rsk\'a 2, 611 37 Brno, Czech Republic}%
\email{klima@math.muni.cz, kunc@math.muni.cz}

\begin{abstract}
  The problem considered in this paper is whether an inequality of
  $\omega$-terms is valid in a given level of a concatenation
  hierarchy of star-free languages. The main result shows that this
  problem is decidable for all (integer and half) levels of the
  Straubing-Th\'erien hierarchy.
\end{abstract}

\keywords{pseudovariety, relatively free profinite semigroup, ordered
  monoid, concatenation hierarchy, Straubing-Th\'erien hierarchy}

\makeatletter%
\@namedef{subjclassname@2010}{%
  \textup{2010} Mathematics Subject Classification}%
\makeatother

\subjclass[2010]{Primary 20M05, 20M07; Secondary 20M35, 68Q70}

\maketitle

\section{Introduction}
\label{sec:intro}

With the advent of computers in the 1950's, there was a surge of
interest in formal languages, among which regular languages play a key
role. After several examples of algebraization of questions on classes
(\emph{varieties}) of regular languages had been already discovered in
the 1960's and early 1970's, Eilenberg described a general framework
for the algebraic formulation of such questions, namely in terms of
what is commonly known as Eilenberg's correspondence
\cite{Eilenberg:1976}. The algebraic structures considered by
Eilenberg are finite semigroups and monoids and the classes of such
structures corresponding to varieties of languages are called
\emph{pseudovarieties}. In general, the hope is that the
algebraization will convert the membership problem for a given variety
of languages in a more manageable membership problem for the
corresponding pseudovariety, and indeed, the pioneering examples of
instances of the correspondence before its general formulation were
found with this purpose and effective application.

Eilenberg's correspondence has been extended in several directions, in
particular to capture more general classes of regular languages. One
such generalization was developed by Pin \cite{Pin:1995d} with the aim
of refining and better understanding a hierarchy of star-free
languages introduced by Brzozowski and
Cohen~\cite{Cohen&Brzozowski:1971} and its variant considered by
Straubing \cite{Straubing:1981a,Straubing:1985} and Th\'erien
\cite{Therien:1981a}. The algebraic structures emerging in this
context are finite semigroups and monoids with a compatible partial
order, the relevant classes of such structures being also called
pseudovarieties. While pseudovarieties of semigroups and monoids are
defined by so-called \emph{pseudoidentities}, which are formal
equalities of members of suitable free profinite structures
\cite{Reiterman:1982,Almeida:1994a}, in the ordered counterpart it
suffices to replace equality by formal inequality, leading to
\emph{pseudoinequalities}, to obtain a similar
result~\cite{Molchanov:1994,Pin&Weil:1996b}.

Additional motivation for investigating the Straubing-Th\'erien
hierarchy comes from logic, specifically from finite model theory.
Indeed, as has been shown by Thomas~\cite{Thomas:1982}, the
Straubing-Th\'erien hierarchy may be viewed as a skeleton of the
hierarchy of languages defined by sentences in the first-order
language with a binary predicate (for ordering positions of letters)
and unary predicates for the letters of the alphabet (to denote the
presence of a letter in a given position), where the complexity of
sentences in prenex normal form is measured in terms of the number of
quantifier alternations.

Although significant progress has been recently achieved on the
membership problem for the levels of the Straubing-Th\'erien hierarchy
\cite{Place&Zeitoun:2014b,Place&Zeitoun:2014a,
  Almeida&Bartonova&Klima&Kunc:2015}, it remains an open problem
whether it is decidable at all levels. A key tool that has been used
in such works is the following \emph{lifting} problem: to determine
when a pair of elements of a finite monoid can be realized as values
in the monoid of the sides of a pseudoinequality which is valid in a
given level of the corresponding hierarchy of pseudovarieties. The
decidability of this condition, which Place and Zeitoun
\cite{Place&Zeitoun:2014a} call the \emph{separation property},
entails the decidability of the membership problem for the
pseudovariety and it may be viewed as a form of
\emph{hyperdecidability}, namely for the inequality $x\le y$, in the
sense of~\cite{Almeida:1999b}. Note that in general there are
uncountably many pseudoinequalities to be considered as potential
liftings of a given pair of elements of a finite monoid.

A method introduced by Steinberg and the first author
\cite{Almeida&Steinberg:2000a} to approach such decidability questions
consists in solving two separate problems involving an \emph{implicit
  signature} $\sigma$ with suitable computational properties:
\begin{enumerate}
\item\label{item:prob-1} to solve the $\sigma$-inequality problem for
  the pseudovariety in question;
\item\label{item:prob-2} to show that, if an instance of the lifting
  problem has a solution, then it admits one with a pseudoinequality
  which is actually a $\sigma$-inequality.
\end{enumerate}
Although problems such as Problem~\eqref{item:prob-2} tend to be very
hard, they have the advantage of being abstract problems which are not
of an algorithmic nature. In the aperiodic case, which is a suitable
setting for the problems concerning the Straubing-Th\'erien hierarchy,
the most frequently considered implicit signature consists of
multiplication, 1 and $\omega$-power, and is also denoted~$\omega$.

The main result of this paper is a solution of
Problem~\eqref{item:prob-1} for the signature~$\omega$ for every level
of the Straubing-Th\'erien hierarchy. There are two key ingredients in
the proof of this result. The first is the fact that, for a
polynomially closed pseudovariety of ordered monoids \pv V satisfying
a pseudoinequality $u\le v$, any factorization of~$u$ induces a
factorization of~$v$ of the same length such that the inequality
remains valid in~\pv V for factors in the same position (Section 3).
The second is a syntactic analysis of $\omega$-terms that leads to a
description of its factors (Section 5) and to a repetition or
periodicity result (Section 6). These may be viewed as finiteness
properties of $\omega$-words over the pseudovariety of all finite
aperiodic monoids. For further such properties and generalizations,
see \cite{Almeida&Costa&Zeitoun:2009a,Gool&Steinberg:2016}. The
combination of these results allows us to prove a sort of completeness
theorem for $\omega$-inequalities valid in the Boolean-polynomial
closure of any polynomially closed pseudovariety of aperiodic ordered
monoids: there is a complete deductive calculus to obtain all such
inequalities from a well-determined basis (Section 7). Once such a
calculus has been established, one may effectively enumerate the
consequences of a recursively enumerable basis. Since the
$\omega$-inequalities which are not consequences of the basis fail in
concrete finite models of the basis, decidability of the
$\omega$-inequality problem follows provided both the basis and the
pseudovariety of ordered monoids are recursively enumerable. Putting
it all together, this yields an inductive argument that shows that all
levels of the Straubing-Th\'erien hierarchy have decidable
$\omega$-inequality problem (Section~8).

\section{Preliminaries}
\label{sec:prelims}

The reader is referred to the standard literature on finite semigroups
for general background \cite{Almeida:1994a,Rhodes&Steinberg:2009qt}.
Nevertheless, we recall here some basic notions for the sake of
completeness.

\subsection{Pseudovarieties}
\label{sec:pseudovarieties}

A \emph{pseudovariety} of monoids is a class of finite monoids that is
closed under taking homomorphic images, submonoids and finite
direct products. For finite ordered monoids, that is, monoids equipped
with a compatible partial order, pseudovarieties are defined in the same way.
A~pseudovariety \pv W of monoids may be identified with the pseudovariety
of ordered monoids consisting of the members of~\pv W equipped with all
possible compatible partial orders. Then, for a pseudovariety \pv V
of ordered monoids, the class of monoids
obtained by forgetting the order of all elements of~\pv V and taking
homomorphic images is a
pseudovariety of monoids, whence it is the
pseudovariety of monoids generated by~\pv V. This is also
the join of~\pv V with its \emph{dual}, consisting of the members
of~\pv V with the orders reversed.

The \emph{trivial} pseudovariety consists of all singleton monoids.
Two further examples of pseudovarieties of monoids are \pv M, consisting
of all finite monoids, and \pv A, consisting of all finite aperiodic
monoids, that is, finite monoids all of whose subgroups are trivial.

By a \emph{pro-\pv V monoid} we mean a monoid with a compact topology
for which multiplication is continuous such that distinct points may
be separated by continuous homomorphisms into members of~\pv V, these
being regarded as discrete spaces. For a pseudovariety \pv V of
[ordered] monoids and a finite set~$A$, the free pro-\pv V monoid
on~$A$ is denoted \Om AV. It may be constructed as the inverse limit
of the natural projective family of $A$-generated monoids from~\pv V.
For our purposes, the relevant property is the universal property that
justifies its name: every function from $A$ to a pro-\pv V monoid $M$
extends uniquely to a continuous homomorphism on~\Om AV, as depicted
in the following diagram:
$$\xymatrix{
  A %
  \ar[r]^(.45)\iota %
  \ar[rd] %
  &
  \Om AV %
  \ar@{-->}[d] %
  \\
  &
  \,M\,.
}$$
This property
of~\Om AV entails that each homomorphism $\varphi \colon A^*\to M$
into a pro-\pv V monoid $M$ extends
uniquely to a continuous homomorphism $\hat\varphi \colon \Om AV\to
M$, leading to the following commutative diagram:
$$\xymatrix@H=11pt{
  A %
  \ar[r]^(.45)\iota %
  \ar@{^(->}[d] %
  &
  \Om AV %
  \ar[d]^{\hat\varphi} %
  \\
  A^* %
  \ar[ru]^{\hat\iota} %
  \ar[r]^\varphi
  &
  \,M\,,
}$$
where $\hat\iota:A^*\to\Om AV$ is the unique extension of $\iota$ to a
homomorphism.
In case $\alpha \colon A^*\to B^*$ is a homomorphism, we may also
view it as a homomorphism $A^*\to\Om BV$, and consider its unique
extension to a continuous homomorphism $\hat\alpha \colon \Om AV\to\Om
BV$.

Often, the natural monoid homomorphism $\hat\iota:A^*\to\Om AV$ is an embedding,
in which case we identify $A^*$ with its image. This is the case, for
instance, for all pseudovarieties of ordered monoids containing the
pseudovariety \pv{MN} consisting of all finite nilpotent semigroups
with an identity element adjoined, a property which holds for most
nontrivial pseudovarieties of ordered monoids considered in this paper
and which will therefore be used freely. Moreover, for such
pseudovarieties, the topology on $A^*$ induced from~\Om AV is
discrete.

The elements of~\Om AV are sometimes called \emph{pseudowords over~\pv
  V}. A \emph{pseudoinequality over~\pv V} is a formal inequality of
two pseudowords over~\pv V; in what follows, they are simply called
inequalities. The inequality $u\le v$ is \emph{trivial} if $u$ and $v$
coincide. For $u,v\in\Om AV$ and $S\in\pv V$, the inequality $u\le v$
is said to \emph{hold} in or to be \emph{satisfied} by~$S$ if
$\varphi(u)\le\varphi(v)$ for every continuous homomorphism
$\varphi:\Om AV\to S$, and to \emph{hold} in a subclass \Cl C of~\pv V
if it holds in every member of~\Cl C; the \emph{pseudoidentity} $u=v$
is \emph{satisfied} by~\Cl C if both inequalities $u\le v$ and $v\le
u$ are satisfied by the members of~\Cl C. It is
well known that, if an inequality $u\le v$ holds in~\pv{MN}, that is,
the pseudoidentity $u=v$ holds in~\pv{MN}, and either $u$ or $v$ are
words, then it is trivial.

Note that, for a pseudovariety of ordered monoids \pv V, \Om AV is an
ordered monoid for the relation $\le$ such that
$u\le v$ if and only if the
inequality $u\le v$ over~\pv V holds in~\pv V.

For a pseudovariety \pv V of ordered monoids, a language $L\subseteq
A^*$ is said to be \emph{\pv V-recognizable} if there exist a
homomorphism $\varphi \colon A^*\to M$ into a monoid $M$ from~\pv V and
an up-closed (that is, an order filter) subset $F$ of~$M$ such that
$L=\varphi^{-1}(F)$. Equivalently, the \emph{syntactic ordered monoid}
$\mathrm{Synt}(L)$ of~$L$ belongs to~\pv V, where $\mathrm{Synt}(L)$
is the quotient of~$A^*$ by the congruence $\le_L\cap\ge_L$, ordered
by the partial order induced by the quasi-order~$\le_L$, where
$u\le_L v$ is defined by the following condition: for every $x,y\in
A^*$, $xuy\in L$ implies $xvy\in L$. The natural homomorphism
$\varphi \colon A^*\to\mathrm{Synt}(L)$ as well as its extension
$\hat\varphi$ are both called \emph{syntactic homomorphisms}. It
should be noted that, in several papers by Pin and coauthors, such
as~\cite{Pin&Weil:1994c}, the syntactic order $\le_L$ is defined to be
the dual of the order considered here; see
\cite{Almeida&Cano&Klima&Pin:2015} for an explanation as to why our
choice should be preferred.

Yet another equivalent formulation of \pv V-recognizability
of~$L\subseteq A^*$ is that its topological closure $\overline{L}$
in~\Om AV is open. If $\varphi \colon A^*\to M$ is a homomorphism into
$M\in\pv V$ that recognizes the language $L$, then $\hat\varphi \colon
\Om AV\to M$ recognizes~$\overline{L}$, in the sense that
$\overline{L}=\hat\varphi^{-1}(\hat\varphi(\overline{L}))$.
Note that, if $L,K\subseteq A^*$ are \pv
V-recognizable languages, then the open set
$\overline{L}\cap\overline{K}$ coincides with $\overline{L\cap K}$.

For pseudovariety \pv V of ordered monoids, the \pv V-recognizable
languages constitute what is known as a \emph{positive variety of
  languages}. More precisely, a positive variety of languages is
defined as a correspondence \Cl V associating with each finite
alphabet $A$ a set $\Cl V(A)$ of regular languages over~$A$ that is
closed under finite union, finite intersection, and the derivative
operations $L\mapsto a^{-1}L=\{w:aw\in L\}$, $L\mapsto
La^{-1}=\{w:wa\in L\}$, and such that $L\in\Cl V(B)$ implies
$\varphi^{-1}(L)\in\Cl V(A)$ whenever $\varphi:A^*\to B^*$ is a
homomorphism. The above defined correspondence $\pv V\mapsto\Cl V$ is
a bijection \cite{Pin:1995d} and it is the natural analog for ordered
monoids of Eilenberg's Correspondence Theorem \cite{Eilenberg:1976},
relating pseudovarieties of monoids and varieties of languages.

\subsection{Polynomial closure}
\label{sec:poly-closure}

Given a positive variety of languages \Cl V, its \emph{polynomial
closure} \PolL V is defined by letting $\PolL V(A)$ consist of all
unions of finitely many languages of the form
$$L_0a_1L_1\cdots a_nL_n,$$
where the $L_i$ belong to $\Cl V(A)$ and the $a_i\in A$ are letters. 
Note that the equality $\PolL{}{\PolL V}=\PolL V$
follows directly from the definition. 
We say that \Cl V is  
\emph{polynomially closed} if $\Cl V= \PolL V$.
It is well known that \PolL V is a positive variety of languages
whenever \Cl V is a variety of languages. Therefore, for 
the pseudovariety \pv V of monoids corresponding to a variety of languages
$\Cl V$, we denote by $\Pol V$ the pseudovariety of ordered monoids corresponding
to the positive variety of languages \PolL V.
A pseudovariety of ordered monoids corresponding to
a polynomially closed positive variety of languages
is also called \emph{polynomially closed}.
Let us remark that $\Pol V$ is not defined
for a pseudovariety of ordered monoids $\pv V$ in general. 

By a result of Pin and Weil
\cite[Theorem~5.9]{Pin&Weil:1994c}, if \pv V is a pseudovariety of
monoids, then \Pol V may be described as the Mal'cev product $\op
x^\omega\le x^\omega yx^\omega\cl\malcev\pv V$.
Combining with a general basis theorem for Mal'cev products, also due
to Pin and Weil \cite{Pin&Weil:1996a}, we obtain a basis of
inequalities for \Pol V (see \cite[Proposition~7.4]{Pin:1997}).

\begin{Prop}
  \label{p:basis-Pol}
  Let \pv V be a pseudovariety of monoids. Then, \Pol V is defined by
  all inequalities of the form $u^{\omega}\le u^\omega vu^\omega$,
  with $u,v \in \Om AM$,
  such that the pseudoidentities $u=v=v^2$ hold in~\pv V.
\end{Prop}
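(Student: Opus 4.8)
The plan is to derive the result directly from the two ingredients quoted above: the Pin--Weil description of \Pol V as a Mal'cev product and the Pin--Weil basis theorem for Mal'cev products. Write \pv W for the pseudovariety of ordered monoids defined by the single pseudoinequality $x^\omega\le x^\omega yx^\omega$ in the two variables $x,y$, so that $\Pol V=\pv W\malcev\pv V$. Thus the whole proof reduces to computing, via the basis theorem, an explicit set of pseudoinequalities defining $\pv W\malcev\pv V$ and recognising it as the family announced in the statement.

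First I would recall the shape of the basis theorem. For a pseudovariety \pv W of ordered monoids presented by pseudoinequalities in variables drawn from a set $X$, and a pseudovariety \pv V of monoids, the Mal'cev product $\pv W\malcev\pv V$ is defined by all pseudoinequalities obtained from the defining ones by a continuous substitution $\sigma$ sending each variable to a pseudoword, subject to the side condition that \pv V identifies all the substituted pseudowords $\sigma(z)$, for $z\in X$, with a single common idempotent. This side condition is exactly what comes from the definition of the Mal'cev product through relational morphisms $\tau\colon M\to N$ with $N\in\pv V$: the defining inequality must hold in each preimage $\tau^{-1}(e)$ of an idempotent $e\in N$, so the variables have to be evaluated inside the block over one such~$e$.

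Applying this to $\pv W=\op x^\omega\le x^\omega yx^\omega\cl$, I would take the substitution $x\mapsto u$, $y\mapsto v$ with $u,v\in\Om AM$. The side condition becomes: \pv V satisfies $u=v$, and this common value is idempotent; that is, \pv V satisfies $u=v=u^2=v^2$. Since $u=v$ and $v=v^2$ in \pv V already force $u=u^2$ in \pv V, this is equivalent to requiring that the pseudoidentities $u=v=v^2$ hold in \pv V. Under the substitution the defining inequality becomes $u^\omega\le u^\omega vu^\omega$, which is precisely the announced family, so the two sets of inequalities coincide and define the same pseudovariety of ordered monoids, namely \Pol V.

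The main point requiring care is invoking the basis theorem in the correct form. The original Pin--Weil theorem is phrased for pseudoidentities of monoids, whereas here \pv W is a pseudovariety of ordered monoids while \pv V is unordered; I would rely on its extension to pseudoinequalities over a monoidal base, which is exactly what the cited Proposition~7.4 of Pin records. One then has to verify two smaller points: that the general side condition genuinely specialises to $u=v=v^2$ (checked by unwinding the relational-morphism definition as above, so that both $x$ and $y$ land in the block over a single idempotent of \pv V), and that letting $u,v$ range over pseudowords in \Om AM over finite alphabets $A$ loses no generality, since substitutions are continuous homomorphisms into free profinite monoids and the idempotency condition is preserved along them. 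Once these are settled, the statement follows.
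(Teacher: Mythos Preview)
Your proposal is correct and follows exactly the route the paper indicates: the paper does not give a detailed proof but simply states that the proposition follows by combining the Pin--Weil description $\Pol V=\op x^\omega\le x^\omega yx^\omega\cl\malcev\pv V$ with the Pin--Weil basis theorem for Mal'cev products (in its ordered form, as in Pin's Proposition~7.4). Your write-up unpacks precisely this combination, including the observation that the side condition ``$u$ and $v$ map to the same idempotent in \pv V'' reduces to the pseudoidentities $u=v=v^2$.
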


Also of interest is to consider the \emph{Boolean-polynomial closure}
of a pseudovariety \pv V of ordered monoids, denoted \PolB V. This
means taking first the pseudovariety of monoids generated by~\pv V,
which recognizes precisely the languages over a finite alphabet $A$
that are Boolean combinations of languages recognized by~\pv V,
and then taking the polynomial closure of that pseudovariety of
monoids. The following basis of inequalities for $\PolB V$ can be
found in~\cite{Almeida&Bartonova&Klima&Kunc:2015}.

\begin{Prop}
  \label{p:basis-PolB}
  Let \pv W be a pseudovariety of monoids and let $\pv V=\Pol W$.
  Then, \PolB V is defined by all inequalities of the form
  $u^{\omega+1}\le u^\omega vu^\omega$, with $u,v \in \Om AM$, such
  that the inequality $v\le u$ holds in~\pv V.
\end{Prop}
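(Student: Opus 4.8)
The plan is to reduce everything to the basis for the polynomial closure of a pseudovariety of monoids already recorded in Proposition~\ref{p:basis-Pol}. Write \pv U for the pseudovariety of monoids generated by \pv V; then $\PolB V=\Pol U$ by definition, and, since \pv U is the join of \pv V with its dual, a pseudoidentity $s=t$ holds in \pv U if and only if both $s\le t$ and $t\le s$ hold in \pv V. Applying Proposition~\ref{p:basis-Pol} to \pv U, the pseudovariety $\PolB V=\Pol U$ is defined by the set $\Sigma_0$ of all inequalities $s^\omega\le s^\omega ts^\omega$ with $s,t\in\Om AM$ such that $s$, $t$ and $t^2$ are pairwise $\le$-equivalent in \pv V. Denoting by $\Sigma$ the proposed basis (all $u^{\omega+1}\le u^\omega vu^\omega$ with $v\le u$ in \pv V), it then suffices to prove that $\Sigma$ and $\Sigma_0$ have the same finite ordered models.

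For the inclusion of the models of $\Sigma$ in those of $\Sigma_0$ I would argue as follows. Fix an inequality $s^\omega\le s^\omega ts^\omega$ of $\Sigma_0$, so that $s$, $t$, $t^2$ are $\le$-equivalent in \pv V. From $t\le t^2$ one gets $t\le t^\omega$ in \pv V by iteration and continuity, and from $t^2\le t$ one gets $t^\omega\le t$; combined with $s\le t$ and $t\le s$ (whence $s^\omega$ and $t^\omega$ are $\le$-equivalent) this yields that $s$ and $s^\omega$ are $\le$-equivalent in \pv V, and in particular $t\le s^\omega$. Consequently the pair $(s^\omega,t)$ satisfies the side condition of $\Sigma$, so the member $(s^\omega)^{\omega+1}\le(s^\omega)^\omega\,t\,(s^\omega)^\omega$ of $\Sigma$ holds in every model of $\Sigma$; since $s^\omega$ is idempotent in \Om AM, this inequality simplifies exactly to $s^\omega\le s^\omega ts^\omega$. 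Hence every finite ordered monoid satisfying $\Sigma$ also satisfies $\Sigma_0$, and thus lies in $\PolB V$.

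The reverse inclusion, that every member of $\PolB V=\Pol U$ satisfies $\Sigma$, is where the real work lies, and I expect it to be the main obstacle. It cannot be obtained from $\Sigma_0$ by purely formal manipulation: substitution into the scheme $x^\omega\le x^\omega yx^\omega$ never produces the power $\omega+1$ on the left, while the only device that does, left multiplication, forces the middle factor to be two-sided $\le$-equivalent to $u$ in \pv V, whereas the hypothesis $v\le u$ is only one-sided (indeed $u^\omega vu^\omega\le u^{\omega+1}$ is the inequality valid in \pv V, the reverse of the one to be established). The validity of $\Sigma$ in $\PolB V$ must therefore genuinely exploit that $\pv V=\Pol W$ is polynomially closed. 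I would reformulate $\PolB V\models u^{\omega+1}\le u^\omega vu^\omega$ as the condition that $u^{\omega+1}\in\overline L$ implies $u^\omega vu^\omega\in\overline L$ for every $L\in\PolB V(A)$, expand such an $L$ as a finite union of marked products $L_0a_1L_1\cdots a_nL_n$ whose factors are Boolean combinations of \pv V-recognizable languages, and, starting from a factorization of $u^{\omega+1}$ witnessing membership in $\overline L$, use $v\le u$ in \pv V together with the factorization-transfer property of polynomially closed pseudovarieties (the tool announced in Section~3) to build a matching factorization of $u^\omega vu^\omega$ into factors in the same positions. The delicate point is controlling the Boolean (clopen) factors across the marked product, where one-sided information about $v$ relative to $u$ must be propagated simultaneously in both directions; organizing this factor matching is the crux of the argument.
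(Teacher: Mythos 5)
First, a point of comparison you could not have known: the paper does not prove this proposition at all — it is quoted from the reference \cite{Almeida&Bartonova&Klima&Kunc:2015} — so there is no in-paper argument to measure your proposal against, and the proposition must be judged on its own terms as the conjunction of two inclusions. The inclusion you do prove is correct and complete: writing $\pv U$ for the pseudovariety of monoids generated by~$\pv V$, indeed $\PolB V=\Pol U$ by definition, a pseudoidentity holds in~$\pv U$ exactly when both of the corresponding inequalities hold in~$\pv V$, and your verification that each Pin--Weil inequality $s^\omega\le s^\omega ts^\omega$ from Proposition~\ref{p:basis-Pol} applied to~$\pv U$ is \emph{literally} an instance of the proposed basis (take $u=s^\omega$, $v=t$; then $t\le s^\omega$ holds in~$\pv V$ by your iteration argument, and $(s^\omega)^{\omega+1}=(s^\omega)^\omega=s^\omega$ as elements of~$\Om AM$) is sound. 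Hence every finite ordered monoid satisfying the proposed inequalities lies in $\PolB V$.

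The converse inclusion — that every member of $\PolB V$ satisfies $u^{\omega+1}\le u^\omega vu^\omega$ whenever $v\le u$ holds in~$\pv V$ — is left by you as a plan, and this is a genuine gap, not a presentational one: it is the substantive half of the proposition, and it is precisely the half the present paper invokes downstream (the proofs of Theorem~\ref{t:omega-basis-PolBV} and Theorem~\ref{t:main-tool} use exactly that these inequalities \emph{hold} in $\PolB V$). Moreover, the plan as sketched has a directional flaw. Theorem~\ref{t:lifting-factorizations} transfers a factorization of the \emph{smaller} side of a valid inequality to the \emph{larger} side; the inequality available in~$\pv V$ is $v\le u$ (equivalently $u^\omega vu^\omega\le u^{\omega+1}$), so the lifting it provides goes from factorizations of $u^\omega vu^\omega$ to factorizations of $u^{\omega+1}$ — the opposite of what your plan requires, namely to convert a factorization of $u^{\omega+1}$ witnessing $u^{\omega+1}\in\overline{L}$ into one of $u^\omega vu^\omega$. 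A transfer in the needed direction would have to run along $u^{\omega+1}\le u^\omega vu^\omega$ itself, which is circular. Finally, the point you flag as delicate is indeed the crux and remains untouched: the factors $L_i$ of the marked products are only $\pv U$-recognizable (Boolean combinations of $\pv V$-recognizable languages), so membership of a factor in~$\overline{L_i}$ is not monotone with respect to the $\pv V$-order, and the one-sided hypothesis $v\le u$ cannot by itself guarantee that the factor of $u^\omega vu^\omega$ into which $v$ is spliced stays in the same~$\overline{L_i}$; producing the required two-sided control of factor images is where the proof in \cite{Almeida&Bartonova&Klima&Kunc:2015} does its real work, and nothing in your sketch supplies it.
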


The \emph{concatenation hierarchy} based on a pseudovariety of monoids
\pv V is the sequence starting at~\pv V that alternates polynomial
closure with forgetting order plus taking homomorphic images; the
pseudovarieties of monoids obtained by forgetting order plus taking
homomorphic images are called \emph{levels} of the hierarchy, while
the pseudovarieties of ordered monoids obtained by the polynomial
closure are called \emph{half levels}. The concatenation hierarchy
based on the trivial pseudovariety is known as the
\emph{Straubing-Th\'erien hierarchy}.

\subsection{Validity of inequalities}
\label{sec:ineq}

The following lemma gives two alternative characterizations of when an
inequality holds in a syntactic ordered monoid.

\begin{Lemma}
  \label{l:inequalities-for-syntactic}
  Let $A$ and $B$ be finite alphabets, $L\subseteq B^*$ be a regular
  language, and $u,v\in\Om AM$ be pseudowords. The following
  conditions are equivalent:
  \begin{enumerate}
  \item\label{item:inequalities-for-syntactic-1} the inequality $u\le
    v$ holds in the syntactic ordered monoid $\mathrm{Synt}(L)$;
  \item\label{item:inequalities-for-syntactic-2} for every homomorphism
    $\alpha \colon A^* \to B^*$ and all words $x,y\in B^*$,
    $x\hat{\alpha}(u)y\in\overline{L}$ implies
    $x\hat{\alpha}(v)y\in\overline{L}$;
  \item\label{item:inequalities-for-syntactic-3} for every homomorphism
    $\alpha \colon A^* \to B^*$ and all pseudowords $x,y\in\Om BM$,
    $x\hat{\alpha}(u)y\in\overline{L}$ implies
    $x\hat{\alpha}(v)y\in\overline{L}$.
  \end{enumerate}
\end{Lemma}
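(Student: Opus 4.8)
The plan is to establish the cycle of implications $(\ref{item:inequalities-for-syntactic-1})\Rightarrow(\ref{item:inequalities-for-syntactic-3})\Rightarrow(\ref{item:inequalities-for-syntactic-2})\Rightarrow(\ref{item:inequalities-for-syntactic-1})$, working throughout with the finite ordered monoid $M=\mathrm{Synt}(L)$, the surjective syntactic homomorphism $\varphi\colon B^*\to M$ and its continuous extension $\hat\varphi\colon\Om BM\to M$. Writing $F=\varphi(L)$ for the corresponding order filter, the key translation tool is the identity $\overline{L}=\hat\varphi^{-1}(F)$ recalled in the preliminaries, which turns membership in $\overline{L}$ into membership in $F$ after applying~$\hat\varphi$.

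The implication $(\ref{item:inequalities-for-syntactic-3})\Rightarrow(\ref{item:inequalities-for-syntactic-2})$ is immediate, since $B^*$ embeds into $\Om BM$ and so every pair of words is in particular a pair of pseudowords. For $(\ref{item:inequalities-for-syntactic-1})\Rightarrow(\ref{item:inequalities-for-syntactic-3})$, I would fix a homomorphism $\alpha\colon A^*\to B^*$ and observe that the composite $\hat\varphi\circ\hat\alpha\colon\Om AM\to M$ is a continuous homomorphism, so that condition~(\ref{item:inequalities-for-syntactic-1}) yields $\hat\varphi(\hat\alpha(u))\le\hat\varphi(\hat\alpha(v))$ in~$M$. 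Given pseudowords $x,y\in\Om BM$ with $x\hat\alpha(u)y\in\overline{L}$, applying $\hat\varphi$ gives $\hat\varphi(x)\,\hat\varphi(\hat\alpha(u))\,\hat\varphi(y)\in F$; compatibility of the order with multiplication together with the fact that $F$ is an order filter then forces $\hat\varphi(x)\,\hat\varphi(\hat\alpha(v))\,\hat\varphi(y)\in F$, that is, $x\hat\alpha(v)y\in\overline{L}$.

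The implication $(\ref{item:inequalities-for-syntactic-2})\Rightarrow(\ref{item:inequalities-for-syntactic-1})$ is where the real content lies. To show that $u\le v$ holds in~$M$, I must verify $\psi(u)\le\psi(v)$ for an arbitrary continuous homomorphism $\psi\colon\Om AM\to M$. Since $\varphi$ is onto, each value $\psi(a)$ with $a\in A$ can be lifted to a word $w_a\in B^*$ with $\varphi(w_a)=\psi(a)$; defining $\alpha(a)=w_a$ produces a homomorphism $\alpha\colon A^*\to B^*$ such that $\hat\varphi\circ\hat\alpha$ and $\psi$ agree on the generators~$A$, hence coincide by continuity and density of $A^*$ in $\Om AM$. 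It then remains to unwind the definition of the syntactic order: $\psi(u)\le\psi(v)$ amounts to the implication $m\,\psi(u)\,n\in F\Rightarrow m\,\psi(v)\,n\in F$ for all $m,n\in M$, and writing $m=\varphi(x)$, $n=\varphi(y)$ with $x,y\in B^*$ (again using surjectivity of~$\varphi$) recasts this, via $\overline{L}=\hat\varphi^{-1}(F)$, exactly as the word-level implication supplied by condition~(\ref{item:inequalities-for-syntactic-2}). The main obstacle is precisely this last step: realizing every continuous homomorphism into~$M$ in the form $\hat\varphi\circ\hat\alpha$ through a choice of word lifts of the generators, and then matching the abstract syntactic order on~$M$ with the concrete membership condition in~$\overline{L}$.
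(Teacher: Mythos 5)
Your proposal is correct and follows essentially the same route as the paper's proof: the same cycle $(\ref{item:inequalities-for-syntactic-1})\Rightarrow(\ref{item:inequalities-for-syntactic-3})\Rightarrow(\ref{item:inequalities-for-syntactic-2})\Rightarrow(\ref{item:inequalities-for-syntactic-1})$, using that $\varphi(L)$ is an order filter and that $\hat\varphi$ recognizes $\overline{L}$ for the first implication, and realizing an arbitrary continuous $\psi\colon\Om AM\to\mathrm{Synt}(L)$ as $\hat\varphi\circ\hat\alpha$ before unwinding the syntactic order for the last. The only difference is that you spell out the construction of $\alpha$ by lifting generators through the surjective syntactic homomorphism, a step the paper leaves implicit.
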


\begin{proof}
  Let $\varphi\colon\Om BM\to\mathrm{Synt}(L)$ be the syntactic homomorphism.
  
  $(\ref{item:inequalities-for-syntactic-1})
  \Rightarrow(\ref{item:inequalities-for-syntactic-3})$ %
  Let $\alpha \colon A^* \to B^*$ be a homomorphism and suppose that
  $x,y\in\Om BM$ are such that $x\hat{\alpha}(u)y\in\overline{L}$.
  Applying $\varphi$ and taking into account that
  $\varphi(\overline{L})=\varphi(L)$, we obtain the relation
  $\varphi(x\hat{\alpha}(u)y)\in\varphi(L)$. Recall that $\varphi(L)$ is an
  order filter in~$\mathrm{Synt}(L)$. Since the order
  in~$\mathrm{Synt}(L)$ is stable and
  $\varphi\hat{\alpha}(u)\le\varphi\hat{\alpha}(v)$
  by~(\ref{item:inequalities-for-syntactic-1}), it follows that
  $\varphi(x\hat{\alpha}(v)y)\in\varphi(L)$. Since $\varphi$ recognizes
  $\overline{L}$, we deduce that $x\hat{\alpha}(v)y\in\overline{L}$, as
  required.

  The implication $(\ref{item:inequalities-for-syntactic-3})
  \Rightarrow(\ref{item:inequalities-for-syntactic-2})$ %
  is trivial.

  $(\ref{item:inequalities-for-syntactic-2})
  \Rightarrow(\ref{item:inequalities-for-syntactic-1})$ %
  Let $\psi\colon\Om AM\to\mathrm{Synt}(L)$ be an arbitrary continuous
  homomorphism. We need to show that $\psi(u)\le\psi(v)$.
  Choose a homomorphism $\alpha \colon A^* \to B^*$ such that
  $\psi = \varphi \circ \hat{\alpha}$.
  By definition of the syntactic order and since
  $\varphi|_{B^*}$ recognizes $L$, the preceding inequality is
  equivalent to the property that, for all $p,q\in\mathrm{Synt}(L)$,
  $p\psi(u)q\in\varphi(L)$ implies $p\psi(v)q\in\varphi(L)$. To
  establish this property, note first that, since $\varphi|_{B^*}$ is
  onto, given $p,q\in\mathrm{Synt}(L)$ such that
  $p\psi(u)q\in\varphi(L)$, there exist $x,y\in B^*$ such that
  $\varphi(x)=p$ and $\varphi(y)=q$. Since $\varphi$ recognizes
  $\overline{L}$, we deduce that $x\hat{\alpha}(u)y\in\overline{L}$.
  By~(\ref{item:inequalities-for-syntactic-2}), it follows that
  $x\hat{\alpha}(v)y\in\overline{L}$, and so indeed
  $p\psi(v)q\in\varphi(L)$, as claimed.
\end{proof}

Lemma~\ref{l:inequalities-for-syntactic} serves to establish a simple
profinite characterization of when an inequality holds in a
pseudovariety of ordered monoids.

\begin{Prop}
  \label{p:order-vs-languages}
  Let \pv V be a pseudovariety of ordered monoids and let $u,v\in\Om
  AM$. The following conditions are equivalent:
  \begin{enumerate}
  \item\label{item:order-vs-languages-1} the inequality $u\le v$ holds in~\pv V;
  \item\label{item:order-vs-languages-2} whenever $L\subseteq A^*$ is
    a \pv V-recognizable language, $u\in\overline{L}$ implies
    $v\in\overline{L}$.
  \end{enumerate}
\end{Prop}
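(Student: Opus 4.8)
The plan is to prove both implications directly from the definitions, relying on two facts recalled in the preliminaries. The first is the topological recognition fact: if a homomorphism $\varphi\colon A^*\to M$ into a finite ordered monoid $M$ recognizes $L$ through an order filter $F$, so that $L=\varphi^{-1}(F)$, then the continuous extension $\hat\varphi\colon\Om AM\to M$ satisfies $\overline L=\hat\varphi^{-1}(F)$. The second is the elementary order-theoretic observation that, in any ordered monoid $S$, one has $s\le t$ if and only if every order filter of $S$ containing $s$ also contains $t$; here the only nontrivial direction is witnessed by the principal filter $\{r\in S:s\le r\}$, which contains $s$ and therefore must contain $t$, forcing $s\le t$. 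Throughout I read ``the inequality $u\le v$ holds in~\pv V'' for $u,v\in\Om AM$ via the canonical projection to \Om AV, that is, as $\psi(u)\le\psi(v)$ for every continuous homomorphism $\psi\colon\Om AM\to S$ with $S\in\pv V$.

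For the implication $(\ref{item:order-vs-languages-1})\Rightarrow(\ref{item:order-vs-languages-2})$, I would take a \pv V-recognizable language $L\subseteq A^*$ together with a recognizing homomorphism $\varphi\colon A^*\to M$, where $M\in\pv V$, and an order filter $F$ with $L=\varphi^{-1}(F)$. Assuming $u\in\overline L=\hat\varphi^{-1}(F)$ gives $\hat\varphi(u)\in F$. Since $u\le v$ holds in $M\in\pv V$, we have $\hat\varphi(u)\le\hat\varphi(v)$, and because $F$ is up-closed this yields $\hat\varphi(v)\in F$, that is, $v\in\hat\varphi^{-1}(F)=\overline L$, as required.

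For the converse $(\ref{item:order-vs-languages-2})\Rightarrow(\ref{item:order-vs-languages-1})$, I would fix $S\in\pv V$ and a continuous homomorphism $\psi\colon\Om AM\to S$ and aim to show $\psi(u)\le\psi(v)$. By the order-filter observation it suffices to verify that every order filter $F\subseteq S$ with $\psi(u)\in F$ also contains $\psi(v)$. Given such an $F$, I would set $\varphi=\psi|_{A^*}$ and $L=\varphi^{-1}(F)\subseteq A^*$; this language is \pv V-recognizable, and since $\psi$ is the unique continuous extension $\hat\varphi$ of $\varphi$, the recognition fact gives $\overline L=\psi^{-1}(F)$. From $\psi(u)\in F$ we obtain $u\in\overline L$, whence $v\in\overline L=\psi^{-1}(F)$ by hypothesis~$(\ref{item:order-vs-languages-2})$, i.e.\ $\psi(v)\in F$. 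As $F$ was an arbitrary order filter containing $\psi(u)$, we conclude $\psi(u)\le\psi(v)$, and since $S$ and $\psi$ were arbitrary, the inequality $u\le v$ holds in~\pv V.

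I expect the combination of these steps to be routine; the only points demanding care are the two preliminary facts. The identity $\overline L=\hat\varphi^{-1}(F)$ rests on the density of $A^*$ in \Om AM together with the discreteness of $M$ (the inclusion $\overline L\subseteq\hat\varphi^{-1}(F)$ is immediate from continuity, while the reverse uses that any net in $A^*$ converging to a point of the clopen set $\hat\varphi^{-1}(F)$ is eventually mapped into $F$ and hence eventually lies in $L$), but this is exactly what is quoted in the preliminaries, so in the write-up it can be invoked rather than reproven. A reader-friendly alternative would route the argument through Lemma~\ref{l:inequalities-for-syntactic}, taking $\alpha$ the identity and $x=y=1$ and letting $L$ range over all \pv V-recognizable languages; the direct argument above, however, avoids appealing to the generation of~\pv V by syntactic ordered monoids and is therefore the one I would present.
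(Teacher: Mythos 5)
Your proof is correct, and it takes a genuinely different route from the paper's. The paper channels both implications through syntactic ordered monoids and Lemma~\ref{l:inequalities-for-syntactic}: for (\ref{item:order-vs-languages-1})$\Rightarrow$(\ref{item:order-vs-languages-2}) it applies that lemma to $\mathrm{Synt}(L)$ with $\alpha$ the identity and $x=y=1$, and for the converse it uses that \pv V is generated by the syntactic ordered monoids of \pv V-recognizable languages $K\subseteq B^*$ over arbitrary alphabets $B$, verifying condition~(\ref{item:inequalities-for-syntactic-2}) of the lemma via the equivalence $x\hat{\alpha}(w)y\in\overline{K}\Leftrightarrow w\in\overline{L}$ for $L=\alpha^{-1}(x^{-1}Ky^{-1})$ --- a step that relies on the closure of positive varieties under inverse homomorphic images and derivatives. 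You instead work directly from the definition of \pv V-recognizability: an arbitrary recognizing homomorphism $\varphi\colon A^*\to M$ with order filter $F$, the clopen description $\overline{L}=\hat{\varphi}^{-1}(F)$ in~\Om AM, and, for the converse, the elementary observation that in a finite ordered monoid $s\le t$ holds exactly when every order filter containing $s$ contains $t$, applied to $\varphi=\psi|_{A^*}$ and $L=\varphi^{-1}(F)$. Your version is more self-contained and stays over the single alphabet~$A$: it needs neither Lemma~\ref{l:inequalities-for-syntactic}, nor the generation of \pv V by syntactic ordered monoids (a standard fact the paper invokes without proof), nor any closure properties of positive varieties; the only external ingredient is the recognition fact, which the preliminaries state in the slightly different form $\overline{L}=\hat{\varphi}^{-1}(\hat{\varphi}(\overline{L}))$ and over~\Om AV, so your sketch of $\overline{L}=\hat{\varphi}^{-1}(F)$ via density of $A^*$ and discreteness of $M$ is a needed and correct supplement. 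What the paper's detour buys is the sharper intermediate statement of Lemma~\ref{l:inequalities-for-syntactic} --- a context-closed characterization of validity in a single syntactic ordered monoid, stated for arbitrary alphabets --- together with the reduction of validity in~\pv V to validity in syntactic monoids only; for the proposition itself, your argument is the more economical one.
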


\begin{proof}
  $(\ref{item:order-vs-languages-1})
  \Rightarrow(\ref{item:order-vs-languages-2})$ %
  Suppose that (\ref{item:order-vs-languages-1}) holds and let
  $L\subseteq A^*$ be a \pv V-recognizable language such that
  $u\in\overline{L}$. By assumption, $\mathrm{Synt}(L)$ satisfies the
  inequality $u\le v$. By
  Lemma~\ref{l:inequalities-for-syntactic}(\ref{item:inequalities-for-syntactic-2}),
  taking $x=y=1$ and $\alpha$ to be the identity mapping, we deduce
  from $u\in\overline{L}$ that $v\in\overline{L}$, as required.

  $(\ref{item:order-vs-languages-2})
  \Rightarrow(\ref{item:order-vs-languages-1})$ %
  Since \pv V is generated by the syntactic ordered monoids
  $\mathrm{Synt}(K)$
  in~\pv V of regular languages $K$, it suffices to show that, for
  every \pv V-recognizable language $K\subseteq B^*$, the ordered
  monoid $\mathrm{Synt}(K)$ satisfies the inequality $u\le v$. For
  this purpose, we establish the condition
  (\ref{item:inequalities-for-syntactic-2}) of
  Lemma~\ref{l:inequalities-for-syntactic}. Thus, we should show that,
  for every homomorphism $\alpha \colon A^* \to B^*$ and all words
  $x,y\in B^*$, $x\hat{\alpha}(u)y\in\overline{K}$ implies
  $x\hat{\alpha}(v)y\in\overline{K}$.

  We claim that, for $w\in\Om AM$, $x\hat{\alpha}(w)y\in\overline{K}$ is
  equivalent to $w\in\overline{L}$, where
  $L=\alpha^{-1}(x^{-1}Ky^{-1})$. Suppose first that
  $x\hat{\alpha}(w)y\in\overline{K}$ and let $w_n$ be a sequence of words
  of~$A^*$ converging to~$w$. Since $\overline{K}$ is an open set such
  that $\overline{K}\cap B^*=K$
  (cf.~\cite[Theorem~3.6.1]{Almeida:1994a}) and the limit
  $x\hat{\alpha}(w)y$ of the sequence $x\alpha(w_n)y$ belongs
  to~$\overline{K}$, passing to a subsequence if necessary, we may
  assume that all terms in the sequence
  belong to~$K$. Then every $w_n$~belongs to $L$, so
  $w\in\overline{L}$. The converse follows from
  the continuity of both~$\hat{\alpha}$ and multiplication.

  It remains to apply the hypothesis (\ref{item:order-vs-languages-2})
  to the language $L=\alpha^{-1}(x^{-1}Ky^{-1})$, which is \pv
  V-recognizable because the class of all \pv V-recognizable languages
  constitutes a positive variety of languages.
\end{proof}

\section{Lifting factorizations}
\label{sec:lifting}

The next result shows that factorizations may be lifted along
inequalities. This property plays a key role in the sequel. The proof
uses nets in compact spaces, which is a classical tool in Topology
(see, for instance \cite{Willard:1970}).

\begin{Thm}
  \label{t:lifting-factorizations}
  Let \pv V be a polynomially closed pseudovariety of ordered monoids
  and let $u,v\in\Om AM$. If the inequality $u\le v$ holds in~\pv V
  then, for every factorization $u=u_0au_1$ with $a\in A$, there is a
  factorization $v=v_0av_1$ such that each inequality $u_i\le v_i$
  holds in~\pv V ($i=0,1$).
\end{Thm}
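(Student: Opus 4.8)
The plan is to reduce the statement to the profinite characterization of validity provided by Proposition~\ref{p:order-vs-languages} and to feed in the polynomial closure of \pv V through the product languages it recognizes. Write $u=u_0au_1$. The first observation is that, for any two \pv V-recognizable languages $L_0,L_1\subseteq A^*$ with $u_0\in\overline{L_0}$ and $u_1\in\overline{L_1}$, the language $L_0aL_1$ is again \pv V-recognizable, since \pv V is polynomially closed and $L_0aL_1$ is of the defining polynomial form $L_0a_1L_1$ with the single letter $a_1=a$. Moreover $u\in\overline{L_0aL_1}$: choosing words $w_n\to u_0$ in $L_0$ and $w_n'\to u_1$ in $L_1$, continuity of multiplication gives $w_naw_n'\to u_0au_1=u$ with $w_naw_n'\in L_0aL_1$. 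As $u\le v$ holds in \pv V, Proposition~\ref{p:order-vs-languages} then yields $v\in\overline{L_0aL_1}$ for \emph{every} such pair $(L_0,L_1)$.

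Next I would build a net converging to $v$ whose terms carry a distinguished occurrence of $a$. Let $D$ be the set of pairs $(L_0,L_1)$ of \pv V-recognizable languages with $u_0\in\overline{L_0}$ and $u_1\in\overline{L_1}$, directed by componentwise reverse inclusion; this is a directed set because \pv V-recognizable languages are closed under intersection and $\overline{L_i\cap K_i}=\overline{L_i}\cap\overline{K_i}$. Indexing by pairs $(d,N)$ with $d=(L_0,L_1)\in D$ and $N$ a neighbourhood of $v$ in \Om AM, the previous paragraph lets me pick a word $z_{(d,N)}\in(L_0aL_1)\cap N$ and, by definition of the product language, a factorization $z_{(d,N)}=p_{(d,N)}\,a\,q_{(d,N)}$ with $p_{(d,N)}\in L_0$ and $q_{(d,N)}\in L_1$. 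By construction $z_{(d,N)}\to v$. Using compactness of \Om AM, I would pass to a subnet along which $p_{(d,N)}\to v_0$ and $q_{(d,N)}\to v_1$ for some $v_0,v_1\in\Om AM$; continuity of multiplication then forces $z_{(d,N)}\to v_0av_1$, so that $v=v_0av_1$.

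It remains to check $u_0\le v_0$ and, symmetrically, $u_1\le v_1$ in \pv V, again through Proposition~\ref{p:order-vs-languages}. Let $L$ be \pv V-recognizable with $u_0\in\overline{L}$. Since $A^*$ is \pv V-recognizable (being recognized by the trivial monoid, which lies in every pseudovariety) and $u_1\in\overline{A^*}=\Om AM$, the pair $(L,A^*)$ belongs to $D$. For indices $(d,N)$ with $d=(L_0,L_1)\succeq(L,A^*)$ one has $L_0\subseteq L$, hence $p_{(d,N)}\in L_0\subseteq L\subseteq\overline{L}$ eventually along the net, and this eventual membership survives on the subnet. As $\overline{L}$ is closed, being a topological closure, the limit satisfies $v_0\in\overline{L}$. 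Thus $v_0\in\overline{L}$ whenever $u_0\in\overline{L}$, which is exactly condition~(\ref{item:order-vs-languages-2}) of Proposition~\ref{p:order-vs-languages}, giving $u_0\le v_0$ in \pv V.

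The main obstacle I anticipate is the bookkeeping that forces the distinguished letter $a$ to persist in the limit while simultaneously controlling both factors: the net must be indexed so that its terms approach $v$ (refinement by neighbourhoods of $v$) and keep the two factors inside prescribed recognizable languages (refinement by the directed set $D$), and the compactness extraction producing $v_0$ and $v_1$ must be performed without destroying the eventual membership $p_{(d,N)}\in L_0$ relied upon in the last step. Once this single combined directed set is set up correctly, everything reduces to continuity of multiplication in \Om AM together with the fact that closures of \pv V-recognizable languages are closed.
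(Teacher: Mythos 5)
Your proof is correct and is essentially the paper's own argument: both reduce validity to Proposition~\ref{p:order-vs-languages}, use polynomial closure to make $L_0aL_1$ recognizable, run a net indexed by the directed set of pairs of \pv V-recognizable languages whose closures contain $u_0$ and $u_1$, extract a convergent subnet by compactness, and conclude via closedness of the sets $\overline{L_i}$. The only (harmless) deviation is that the paper factorizes $v$ itself at each index, using the equality $\overline{L_0}a\overline{L_1}=\overline{L_0aL_1}$, whereas you factorize words approximating $v$ and compensate with the extra neighbourhood coordinate in your directed set.
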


\begin{proof}
  Let $u=u_0au_1$ be an arbitrary factorization with $a\in A$. Let $I$
  be the set of all pairs $(L_0,L_1)$ of \pv V-recognizable languages
  $L_i\subseteq A^*$ such that $u_i\in\overline{L_i}$ ($i=0,1$). We
  consider on~$I$ the partial order defined by $(L_0,L_1)\le(K_0,K_1)$
  if $K_i\subseteq L_i$ ($i=0,1$). Since the positive variety of
  languages corresponding to~\pv V is closed under intersection, the
  above partial order on the set~$I$ is upper directed.
  For each $(L_0,L_1)\in I$, since $u=u_0au_1\in
  \overline{L_0}a\overline{L_1}=\overline{L_0aL_1}$ and the language
  $L_0aL_1$ is \pv V-recognizable because \pv V is polynomially
  closed, it follows from Proposition~\ref{p:order-vs-languages} that
  $v\in\overline{L_0}a\overline{L_1}$, and so we may choose a
  factorization $v=v_0^{(L_0,L_1)}av_1^{(L_0,L_1)}$ such that each
  $v_i^{(L_0,L_1)}$ belongs to~$\overline{L_i}$ ($i=0,1$).

  The mapping $\eta\colon I\to(\Om AM)^2$ defined by
  $\eta(L_0,L_1)=(v_0^{(L_0,L_1)},v_1^{(L_0,L_1)})$ may be viewed as a
  net in the compact product space $(\Om AM)^2$. Hence, there is a
  convergent subnet $\eta\circ\lambda$ determined by a mapping
  $\lambda\colon J\to I$ from another upper directed set $J$ into $I$ such
  that, for every $(L_0,L_1)\in I$ there is some $j\in J$ such that
  $(L_0,L_1)\le\lambda(j)$. Let $(v_0,v_1)$ be the limit of the subnet
  $\eta\circ\lambda$. By continuity of multiplication, since
  $v=v_0^{\lambda(j)}av_1^{\lambda(j)}$ for every $j\in J$, it follows
  that $v=v_0av_1$.

  We claim that each inequality $u_i\le v_i$ holds in~\pv V ($i=0,1$).
  To establish these inequalities, we apply again
  Proposition~\ref{p:order-vs-languages}: it suffices to show that,
  for all pairs of \pv V-recognizable languages $L_i\subseteq A^*$ such
  that $u_i\in\overline{L_i}$ ($i=0,1$), we have $v_i\in\overline{L_i}$
  ($i=0,1$). Let $j_0\in J$ be such that $(L_0,L_1)\le\lambda(j_0)$.
  Now, for every $i \in \{0,1\}$ and $j\in J$ such that $j\ge j_0$,
  the pseudoword
  $v_i^{\lambda(j)}$ belongs to the closure of the $i$th component of
  $\lambda(j)$, which is contained in $\overline{L_i}$. Hence
  $v_i^{\lambda_(j)}\in\overline{L_i}$ for every $j\ge j_0$, which
  implies that the limit $v_i$ also belongs to the closed
  set~$\overline{L_i}$, as claimed.
\end{proof}

Most of the time, it will be inconvenient to keep referring to the
letter $a$ in the factorizations to be lifted along inequalities
considered in Theorem~\ref{t:lifting-factorizations}. The following
result avoids it and further extends the lifting to an arbitrary
number of factors.

\begin{Cor}
  \label{c:lifting-factorizations}
  Let \pv V be a polynomially closed pseudovariety of ordered monoids
  and let $u,v\in\Om AM$. If the inequality $u\le v$ holds in~\pv V
  then, for every factorization $u=u_1\cdots u_n$, there is a
  factorization $v=v_1\cdots v_n$ such that each inequality $u_i\le
  v_i$ holds in~\pv V ($i=1,\ldots,n$).
\end{Cor}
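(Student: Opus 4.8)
The plan is to derive Corollary~\ref{c:lifting-factorizations} from Theorem~\ref{t:lifting-factorizations} by induction on the number~$n$ of factors, reducing the general factorization to the single-letter factorizations handled by the theorem. The base case $n=1$ is trivial, since $u=u_1$ forces $v=v_1$ and the inequality $u\le v$ is exactly $u_1\le v_1$. For the inductive step the natural idea is to split off the first factor $u_1$ and lift, then apply the induction hypothesis to the remaining block $u_2\cdots u_n$.

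The main obstacle is that Theorem~\ref{t:lifting-factorizations} only lifts factorizations whose ``cut point'' is marked by an actual letter $a\in A$, whereas a general factorization $u=u_1\cdots u_n$ may cut $u$ between two positions where no letter is singled out; indeed, some factors $u_i$ could even be empty, or the cut could fall in the interior of the profinite part of a pseudoword. To get around this I would first reduce to the case where each cut is witnessed by a letter. The clean way to do this is to introduce a fresh letter: working over the alphabet $A\cup\{c\}$ with $c\notin A$, consider the word $w=u_1cu_2c\cdots cu_n$ obtained by inserting the marker $c$ between consecutive factors. One checks that the inequality $u\le v$ in \pv V lifts to a corresponding inequality involving these marked words; more precisely, I would apply the theorem repeatedly to the $n-1$ cuts, each now marked by the letter~$c$, peeling off one factor at a time.

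Concretely, I would argue as follows. Assuming the result for fewer than $n$ factors, write $u=u_1\cdot(u_2\cdots u_n)$. To apply Theorem~\ref{t:lifting-factorizations} directly I need a letter at the cut, so I rewrite using a fresh marker and the evaluation homomorphism that erases it; alternatively, and more in the spirit of the theorem, I apply Theorem~\ref{t:lifting-factorizations} to the factorization $u=u_1\,a\,u'$ in the case where $u_2\cdots u_n$ begins with a letter $a$, obtaining $v=v_1\,a\,v'$ with $u_1\le v_1$ and $au'\le av'$ (hence $u_2\cdots u_n\le av'$, absorbing~$a$ into the first remaining factor). The delicate point is precisely when $u_2\cdots u_n$ does \emph{not} begin with a letter, i.e.\ when it starts with a nontrivial profinite factor; this is exactly the configuration the fresh-marker device is designed to handle, and it is where the argument must be made carefully rather than by a naive left-to-right peeling.

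Once the marker reduction is in place, the induction is routine: having obtained a first factor $v_1$ with $u_1\le v_1$ and a tail $v'$ with $u_2\cdots u_n\le v'$, I invoke the induction hypothesis on the $(n-1)$-fold factorization of $u_2\cdots u_n$ to split $v'=v_2\cdots v_n$ with $u_i\le v_i$ for $i=2,\dots,n$, and then $v=v_1v_2\cdots v_n$ is the desired factorization. I expect the writing to be short, with essentially all the content concentrated in reducing an arbitrary cut to one marked by a letter so that Theorem~\ref{t:lifting-factorizations} applies; the bookkeeping of the inductive recombination of factors is then immediate from the transitivity and stability (compatibility with multiplication) of the relation~$\le$ on~\Om AM.
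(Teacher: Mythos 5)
Your induction scheme (peel off $u_1$, lift the cut with Theorem~\ref{t:lifting-factorizations}, recurse on the tail, recombine using compatibility of $\le$ with multiplication) is the same as the paper's, and the recombination step is fine. The genuine gap is at exactly the point you flag as delicate. You treat a nonempty tail $u_2\cdots u_n$ as falling into two cases: either it begins with a letter, or it ``starts with a nontrivial profinite factor,'' and you propose a fresh-marker letter $c$ for the second case. That second case does not exist: in $\Om AM$ over a finite alphabet, every pseudoword other than $1$ has a first letter. Indeed, each set $a\Om AM$ ($a\in A$) is a continuous image of a compact space, hence closed, so the finite union $\bigcup_{a\in A}a\Om AM$ is closed; it contains $A^+$, whose closure contains $\Om AM\setminus\{1\}$, and it does not contain $1$, whence $\Om AM\setminus\{1\}=\bigcup_{a\in A}a\Om AM$. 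This is precisely the ``standard compactness argument'' in the paper's proof: if $u_2\cdots u_n\ne 1$, write it as $au'$ with $a\in A$ and apply the theorem to $u=u_1au'$; if $u_2\cdots u_n=1$ (a case your sketch also skips), take $v_1=v$ and $v_2=\cdots=v_n=1$.

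Moreover, the fresh-marker device could not be salvaged as stated: the hypothesis $u\le v$ is an inequality of pseudowords over $A$, and it gives no inequality in~\pv V involving the marked pseudoword $u_1cu_2c\cdots cu_n$ over $A\cup\{c\}$; nor is there any ``marked version'' of $v$ to compare it with, since a factorization of $v$ is exactly what is being constructed. So the sentence ``one checks that the inequality $u\le v$ lifts to a corresponding inequality involving these marked words'' is the entire missing content, and it is not checkable. Replacing that device by the first-letter fact above repairs the proof, and what results is essentially the paper's own argument.
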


\begin{proof}
  Proceeding by induction, it suffices to consider the case $n=2$. So,
  suppose that the inequality $u\le v$ holds in~\pv V and consider a
  factorization $u=u_1u_2$. If $u_2=1$, then the factorization
  $v=v\cdot1$ has the required properties. Otherwise, taking into
  account that $u_2$ is the limit of a sequence of nonempty words, a
  standard compactness argument shows that there is a factorization
  $u_2=au'$, where $a$ is some letter from~$A$. We may then apply
  Theorem~\ref{t:lifting-factorizations} to lift the factorization
  $u=u_1au'$ along the inequality $u\le v$ to a factorization
  $v=v_1av'$ such that the inequalities $u_1\le v_1$ and $u'\le v'$
  hold in~\pv V. It remains to take $v_2=av'$ and observe that the
  inequality $u_2\le v_2$ also holds in~\pv V.
\end{proof}

\section{Omega-inequalities}
\label{sec:om-ineq}

It is well known that elements of the free profinite monoid \Om AM may
be viewed as operations with a natural interpretation on each
profinite monoid in such a way that the interpretation is preserved
under continuous homomorphisms (see, for instance,
\cite{Almeida:2003cshort}). More precisely, each $w\in\Om AM$ defines
an $A$-ary operation symbol which is \emph{naturally interpreted} in a
profinite monoid $M$ as the operation $w_M \colon M^A\to M$ that maps each
function $\varphi\colon A\to M$ to~$\hat{\varphi}(w)$.
By an \emph{implicit signature}, we mean a set of such operation
symbols including the binary multiplication and the (nullary) identity
element, that is, the standard signature for working with monoids. For
an implicit signature $\sigma$, each profinite monoid $M$ has thus a
natural structure of a $\sigma$-algebra by interpreting each operation
symbol naturally. It is a simple exercise to show that, for a
pseudovariety \pv V of ordered monoids, the $\sigma$-subalgebra of~\Om
AV generated by~$A$, denoted by \oms AV, is a \pv V-free
$\sigma$-algebra on $A$. Elements of~\oms AV will be called
\emph{$\sigma$-words (over~\pv V)}.

The absolutely free $\sigma$-algebra on a generating set $A$ is the
term $\sigma$-algebra $T_\sigma(A)$. The members of $T_\sigma(A)$ are
obtained recursively from the elements of~$A$ by formally applying
successively an operation from $\sigma$. As is standard, they may be
visualized as finite rooted trees in which the leaves are labeled by
members of~$A$ or the constant $1$ and the non-leaf nodes are labeled
by elements of~$\sigma$; for each non-leaf node with operation $w$ the
sons are written in the order they are taken as arguments of the
operation $w$. A node is called a \emph{right descendant} whenever it
is the second son of a node labeled by a binary operation.

Such construction and representation are unique for each $\sigma$-term
$t$.
For a pseudovariety \pv V of ordered monoids, there is a \emph{natural}
homomorphism of $\sigma$-algebras $\theta\colon T_\sigma(A)\to\oms AV$
mapping each free generator to itself. For a $\sigma$-word $u$
over~\pv V, a $\sigma$-term in $\theta^{-1}(u)$ is said to
\emph{represent}~$u$.

We will be concerned with the signature $\omega$ consisting of binary
multiplication and the usual $\omega$-power, whose natural
interpretation on a profinite monoid $M$ maps each element $s$ to the
unique idempotent in the closed subsemigroup of~$M$ generated by~$s$.
As an example, consider the $\omega$-word $(a^2b^\omega)^\omega ab^\omega$.
One of its $\omega$-term representations is described by the 
tree in Figure~\ref{fig:tree}.
Note that the tree has four right descendants, which are graphically
indicated by the lower end of an edge going down from a node towards
the right.
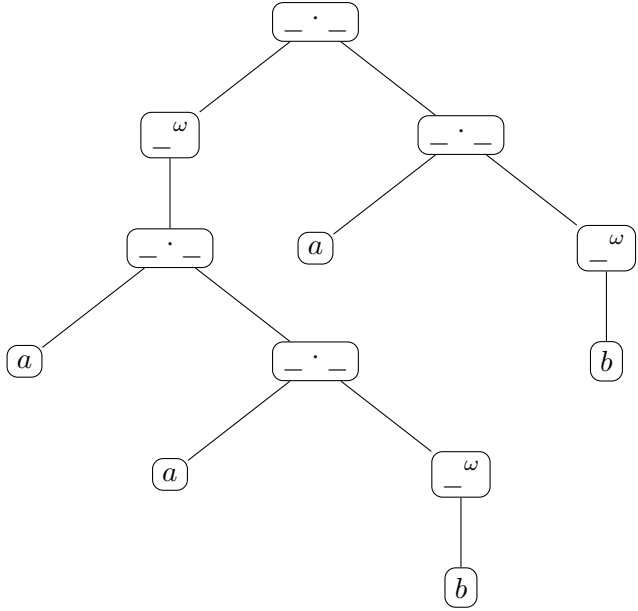
\begin{figure}[ht]
  \centering
  \begin{tikzpicture}[sibling distance=10em,
    every node/.style = {shape=rectangle, rounded corners,
      draw, align=center}]]
    \node {$\_\cdot\_$}
    child { node {$\_^\omega$}
      child { node {$\_\cdot\_$}
        child { node {$a$}}
        child { node {$\_\cdot\_$}
          child { node {$a$}}
          child { node {$\_^\omega$}
            child { node {$b$} }
          }
        }
      }
    }
    child { node {$\_\cdot\_$} 
      child { node {$a$} }
      child { node {$\_^\omega$}
        child { node {$b$} }
      }
    };
  \end{tikzpicture}  
  \caption{A tree representing the $\omega$-word $(a^2b^\omega)^\omega
    ab^\omega$.}
  \label{fig:tree}
\end{figure}

By the \emph{$\omega$-inequality problem} for a pseudovariety \pv V
of ordered monoids we mean the problem that takes
as input a pair $(u,v)$ of $\omega$-terms and asks whether
the inequality $u\le v$ is valid in~\pv V. Decidability of this
problem amounts to being able to algorithmically calculate in the
ordered monoid \omo AV.
Replacing inequalities by equalities, one may analogously define the
corresponding notions such as \emph{decidability} of the
\emph{$\omega$-equality problem}.

Our aim is to show that the $\omega$-inequality problem is
decidable for all levels of the Straubing-Th\'erien hierarchy of
aperiodic monoids. Rather than trying to construct efficient
algorithms for such an infinite class of problems, we concentrate on
``theoretical decidability'', proving simply that both the instances
of the problem with positive solution and those with negative solution
may be recursively enumerated.
For the purpose of enumerating inequalities with certain properties,
it is convenient to consider only finite alphabets contained in a
fixed countable set of variables, which we do from hereon without
further mention.

We say that a pseudovariety of ordered monoids \pv V is
\emph{recursively enumerable} if there is a Turing machine that
successively produces as outputs precisely the elements of~\pv V, up
to isomorphism, and nothing else. Equivalently, the corresponding
positive variety of languages is recursively enumerable.

\begin{Prop}
  \label{p:re-negative-cases}
  Let \pv V be a recursively enumerable pseudovariety of ordered
  monoids. Then the $\omega$-inequality problem for~\pv V is
  co-recursively enumerable.
\end{Prop}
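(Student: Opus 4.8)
The plan is to show that the set of \emph{negative} instances—pairs $(u,v)$ of $\omega$-terms for which $u\le v$ fails in~\pv V—is recursively enumerable. The starting observation is that, by the definition of validity in a class, the inequality $u\le v$ holds in~\pv V precisely when it holds in every member $S$ of~\pv V, and it holds in a fixed finite ordered monoid $S$ precisely when $\hat\varphi(u)\le\hat\varphi(v)$ for every continuous homomorphism $\varphi\colon\Om AM\to S$, where $A$ is the (finite) set of letters occurring in $u$ and $v$ and $\hat\varphi(u)$ denotes the value obtained from the natural interpretation of the $\omega$-operations. By the universal property of~\Om AM, such homomorphisms are in bijection with the maps $A\to S$, of which there are only finitely many since $S$ and $A$ are finite. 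Consequently, $u\le v$ \emph{fails} in~\pv V if and only if there exist a member $S\in\pv V$ and a map $\varphi\colon A\to S$ such that $\hat\varphi(u)\not\le\hat\varphi(v)$ in~$S$.

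The key effectiveness ingredient is that, for a fixed finite ordered monoid $S$ and a fixed assignment $\varphi\colon A\to S$, the values $\hat\varphi(u)$ and $\hat\varphi(v)$ can be computed. Indeed, evaluating an $\omega$-term in~$S$ proceeds recursively along its tree representation: a multiplication node is interpreted by the product in~$S$, while an $\omega$-power node is interpreted by the unique idempotent power of its argument, which is found by forming the successive powers $s,s^2,s^3,\dots$ until the idempotent of the finite cyclic subsemigroup generated by~$s$ is reached. Thus, for each triple $(u,v,S)$ one can effectively test, over the finitely many assignments $\varphi\colon A\to S$, whether some assignment witnesses $\hat\varphi(u)\not\le\hat\varphi(v)$.

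It remains to combine this with the recursive enumerability of~\pv V. Let $T$ be a Turing machine enumerating the members $S_1,S_2,\dots$ of~\pv V up to isomorphism. We enumerate the negative instances by dovetailing: we run $T$ while simultaneously enumerating all input pairs $(u,v)$, and for every pair $(u,v)$ and every monoid $S_k$ produced so far we perform the finite test of the previous paragraph. Whenever the test succeeds—so that some assignment into $S_k$ witnesses the failure of $u\le v$—we output the pair $(u,v)$. By the characterization above, a pair $(u,v)$ is eventually output if and only if $u\le v$ fails in~\pv V, so this procedure enumerates exactly the negative instances, which shows that the $\omega$-inequality problem for~\pv V is co-recursively enumerable. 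The only point requiring care—rather than being a genuine obstacle—is the effective evaluation of the $\omega$-power in a finite monoid, which is precisely what makes the single-witness test decidable; everything else is a routine dovetailing over the enumeration of~\pv V and over the finitely many assignments into each finite member.
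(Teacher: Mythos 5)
Your proposal is correct and follows essentially the same route as the paper's proof: enumerate the members of~\pv V together with all candidate inequalities, use the computability of the $\omega$-power in a finite monoid (the paper uses $s^\omega=s^{n!}$ with $n=|M|$, you iterate powers until the idempotent appears, which is equivalent) to test each finite monoid against each inequality over the finitely many assignments of the letters, and output the inequalities that fail.
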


\begin{proof}
  There is a Turing machine that successively enumerates all pairs
  consisting of a member $M$ of~\pv V, up to isomorphism, and an
  $\omega$-inequality $u\le v$, and nothing else. Note that the
  $\omega$-power of an element $s$ is computable in~$M$: for instance,
  one may compute it as $s^\omega=s^{n!}$, where $n=|M|$. Hence, one
  may effectively test for each such pair $(M,u\le v)$ whether $M$
  satisfies $u\le v$ and output the pair $u\le v$ in the negative
  case. This way, we recursively enumerate precisely the inputs for
  the $\omega$-inequality problem with negative output.
\end{proof}

We are thus left with enumerating the positive cases of the
$\omega$-inequality problem for suitable pseudovarieties of ordered
monoids~\pv V, a task that is accomplished in
Theorem~\ref{t:main-tool}.
We are actually going to deal with inequalities of $\omega$-words
from $\omo AA$, instead of inequalities of $\omega$-terms,
in order to be able to employ known properties of the monoids $\omo AA$.
With this aim, we generalize the $\omega$-inequality problem
from $\omega$-terms to $\omega$-words in a given pseudovariety.
Let \pv W be a pseudovariety of monoids with decidable $\omega$-equality
problem, that is, such that calculations in the ordered monoids \omo AW
can be performed algorithmically. By the \emph{$\omega$-inequality
problem over~\pv W} for a pseudovariety \pv V of ordered monoids
contained in~\pv W we mean the problem that takes as input a pair
$(u,v)$ of $\omega$-words from \omo AW and asks whether
the inequality $u\le v$ is valid in~\pv V.
The following result allows us to deal only with such restricted
$\omega$-inequality problems, when convenient.

\begin{Prop}
  \label{p:raising-ineq-prob}
  Let \pv W be a pseudovariety of monoids with decidable
  $\omega$-equality problem and let \pv V be a pseudovariety of
  ordered monoids contained in~\pv W such that the $\omega$-inequality
  problem for~\pv V over~\pv W is recursively enumerable. Then, the
  $\omega$-inequality problem for~\pv V is recursively enumerable.
\end{Prop}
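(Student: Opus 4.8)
The plan is to reduce the $\omega$-inequality problem for~\pv V, whose instances are pairs of $\omega$-terms, to the $\omega$-inequality problem for~\pv V over~\pv W, whose instances are pairs of $\omega$-words from \omo AW. Concretely, I would exhibit a computable map sending a pair of $\omega$-terms $(s,t)$ to the pair $(\bar s,\bar t)$ of $\omega$-words from \omo AW that they represent, and show that this map is a many-one reduction, that is, that $s\le t$ is valid in~\pv V precisely when $\bar s\le\bar t$ is. Since the preimage of a recursively enumerable set under a computable map is recursively enumerable---provided equality in the codomain is decidable, which here is guaranteed by the decidability of the $\omega$-equality problem for~\pv W---this reduction yields the conclusion at once, the over-\pv W problem being recursively enumerable by hypothesis.

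The heart of the matter is therefore the equivalence: $s\le t$ is valid in~\pv V if and only if $\bar s\le\bar t$ is valid in~\pv V. This rests on the containment $\pv V\subseteq\pv W$. Indeed, for every member $S$ of~\pv V and every function $A\to S$, the value in~$S$ obtained by naturally interpreting the $\omega$-term $s$ coincides with the value obtained by naturally interpreting the $\omega$-word $\bar s$ through the continuous homomorphism $\Om AW\to S$ that extends it, because $\bar s$ is by definition the image of $s$ under the natural homomorphism $T_\omega(A)\to\omo AW$ and natural interpretations are preserved by continuous homomorphisms. Hence the two inequalities are tested by exactly the same homomorphisms into exactly the same members of~\pv V and yield the same truth value.

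It remains to turn the reduction into an effective enumeration. I would run the Turing machine that enumerates the positive instances of the over-\pv W problem; since its outputs are $\omega$-words from \omo AW, they are produced through chosen $\omega$-term representatives $(p_k,q_k)$. In parallel, I would enumerate all pairs of $\omega$-terms $(s_i,t_i)$ and dovetail over the indices $i$ and~$k$, testing at each step whether $s_i$ and $p_k$ represent the same element of \omo AW and likewise for $t_i$ and~$q_k$; whenever both tests succeed I output $(s_i,t_i)$. By the equivalence above, every pair so produced is a positive instance, and conversely every positive pair $(s_i,t_i)$ is eventually output, since representatives of $\bar s_i$ and $\bar t_i$ must appear among the $(p_k,q_k)$.

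The only point at which effectiveness could fail is the comparison of $\omega$-term representatives, and this is exactly where the hypothesis on~\pv W enters: deciding whether two $\omega$-terms represent the same element of \omo AW is the $\omega$-equality problem for~\pv W, which is assumed decidable. I expect the main difficulty of the argument to be conceptual rather than algorithmic, namely the clean verification of the displayed equivalence, ensuring that passing from an $\omega$-term to the $\omega$-word it represents over~\pv W does not alter validity in~\pv V; the dovetailing itself is routine.
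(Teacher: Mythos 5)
Your proposal is correct and essentially coincides with the paper's own proof: the paper likewise dovetails the enumeration of positive instances of the problem over~\pv W (given by $\omega$-term representatives) against an enumeration of all pairs of $\omega$-terms, using the decidable $\omega$-equality problem for~\pv W to match representatives, and relies on the same observation that validity in~\pv V is unchanged when an $\omega$-term is replaced by the $\omega$-word it represents over~\pv W (since $\pv V\subseteq\pv W$, every evaluation in a member of~\pv V factors through $\Om AW$). Your write-up in fact makes explicit this last semantic equivalence, which the paper leaves implicit.
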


\begin{proof}
  Let $\pi\colon \Om AM\to\Om AW$ be the unique continuous homomorphism
  mapping each generator to itself. By assumption, there is a Turing
  machine enumerating the quadruples $(u,v,w,z)$, where $u,v\in\omo AW$
  and $w,z\in\omo AM$ are such that $u\le v$ holds in~\pv V.
  Using the decidability of the $\omega$-equality problem for \pv W,
  for each such quadruple, algorithmically calculate $\pi(u)$ and $\pi(v)$,
  and check whether they are equal to $w$ and $z$, respectively.
  In the affirmative case, output the $\omega$-inequality $w\le z$.
  The resulting Turing machine recursively enumerates the positive cases
  of the $\omega$-inequality problem for~\pv V.
\end{proof}

\section{Equidivisibility and factoriality}
\label{sec:equidiv}

We recall here the notion of equidivisibility, which was introduced
in~\cite{McKnight&Storey:1969}. A semigroup $S$ is said to be
\emph{equidivisible} if, for all $s,t,u,v\in S$ such that $st=uv$,
there is $w\in S^1$ such that either $s=uw$ and $wt=v$, or $sw=u$ and
$t=wv$. A pseudovariety of monoids \pv V is \emph{equidivisible} if
\Om AV is equidivisible for every finite set $A$.
A pseudovariety of monoids \pv V is said to be \emph{closed under
  concatenation} if, for all \pv V-recognizable languages
$K,L\subseteq A^*$, the language $KL$ is also \pv V-recognizable.
Similar notions may be considered for pseudovarieties of semigroups.

As shown in~\cite[Lemma~2.3]{Almeida&ACosta:2007a} for pseudovarieties
of semigroups, such a pseudovariety \pv V containing all finite
nilpotent semigroups is closed under concatenation if and only if the
multiplication in~\Om AV is an open mapping for each finite set~$A$.
As mentioned in Section~\ref{sec:prelims}, the requirement that \pv V
contain all finite nilpotent semigroups is made to ensure that the
subsemigroup of~\Om AV generated by the free generators is free and
discrete. In the case of pseudovarieties of monoids, the corresponding
sufficient condition is that \pv V contain~\pv{MN}. Under such an
assumption for a pseudovariety of monoids \pv V, the argument in the
proof of~\cite[Lemma~2.3]{Almeida&ACosta:2007a} also yields that \pv V
is closed under concatenation if and only if the multiplication in~\Om
AV is an open mapping for each finite set~$A$.

Another relevant result from the same paper
is~\cite[Lemma~4.8]{Almeida&ACosta:2007a}, whose proof shows that, if
the multiplication in~\Om AV is an open mapping, then \Om AV is
equidivisible. In particular, since the pseudovariety \pv A is closed
under concatenation, the multiplication in each profinite aperiodic
monoid \Om AA is an open mapping and \pv A is equidivisible.

For an $\omega$-term $s$ and a positive integer $k$, the $\omega$-term
$s^k$ is defined recursively by $s^1 = s$ and $s^{k+1} = s^k \cdot s$.
The need for expliciting this definition is due to the fact that the
multiplication of $\omega$-terms is not associative, as we prefer to
keep track of the order in which the operations are performed.
Additionally, we assume that, for all $\omega$-terms $s$ and $t$, both
$s^0 \cdot t$ and $t \cdot s^0$ denote~$t$.

\emph{Decompositions} of an $\omega$-term $t$ are pairs of
$\omega$-terms, defined inductively with respect to the structure
of~$t$. If $t$ is either $1$ or a letter, then its decompositions are
$(1,t)$ and $(t,1)$. If $t = t_1 \cdot t_2$, then $t$ has
decompositions of two symmetric forms:
\begin{itemize}
\item for every decomposition $(s_1,s_2)$ of $t_1$, the pair $(s_1,
  s_2 \cdot t_2)$ is a decomposition of $t$;
\item for every decomposition $(s_1,s_2)$ of $t_2$, the pair $(t_1
  \cdot s_1, s_2)$ is a decomposition of $t$.
\end{itemize}
If $t = s^{\omega}$, then the decompositions of $t$ are defined to be
the pairs of the form $(s^k \cdot s_1, s_2 \cdot s^\ell)$,
where $(s_1, s_2)$ is a decomposition of $s$ and $k,\ell$ are
non-negative integers or~$\omega$, with at least one of them equal
to~$\omega$.

In order to illustrate the definition with a concrete example,
let $a,b$ be a pair of letters.
Decompositions of the term $a\cdot b$ are exactly
$(1, a\cdot b)$, $(a, 1\cdot b)$, $(a\cdot 1, b)$ and $(a\cdot b, 1)$.
Consequently, decompositions of the term $(a\cdot b)^\omega$ are
\begin{align*}
&((a\cdot b)^k\cdot 1, (a\cdot b)\cdot (a\cdot b)^\ell),
& &((a\cdot b)^k\cdot a, (1\cdot b)\cdot (a\cdot b)^\ell),\\
&((a\cdot b)^k\cdot (a\cdot 1), b\cdot (a\cdot b)^\ell),
& &((a\cdot b)^k\cdot  (a\cdot b), 1 \cdot (a\cdot b)^\ell),
\end{align*}
where $k$ and $\ell$ are non-negative integers or~$\omega$,
with at least one of them equal to~$\omega$.
In particular, the decompositions obtained for $\ell=0$ and $k=\omega$ are
\begin{align*}
&((a\cdot b)^\omega\cdot 1, a\cdot b ),
& &((a\cdot b)^\omega\cdot a, 1\cdot b ),\\
&((a\cdot b)^\omega\cdot (a\cdot 1), b ),
& &((a\cdot b)^\omega\cdot  (a\cdot b), 1 ).
\end{align*}
Note that $((a\cdot b)^\omega,1)$, $((a\cdot b)^\omega\cdot a, b )$
and $((a\cdot b)^\omega, (a\cdot b)^\omega)$
fail to be decompositions of $(a\cdot b)^\omega$.
However, as the next result shows, all factorizations of $\omega$-words
may be obtained from decompositions of $\omega$-terms representing
them. It appears to be intimately related with results
of~\cite{Almeida&Costa&Zeitoun:2015a}, but not quite to follow from
them in any direct way. We thus include here a direct proof. The key
ingredients are the facts that \Om AA is equidivisible and its
multiplication is an open mapping.

\begin{Thm}
  \label{t:omega-term-decomposition}
  Let $t$ be an $\omega$-term and $u, v \in \Om AA$. Then, $t$
  represents the product $u v$ if and only if there exists a
  decomposition $(t_1,t_2)$ of $t$ such that $t_1$ represents $u$ and
  $t_2$ represents $v$.
\end{Thm}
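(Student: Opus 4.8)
The plan is to prove both directions by induction on the structure of the $\omega$-term $t$, using equidivisibility and the openness of multiplication in $\Om AA$ as the key semigroup-theoretic tools.

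First, the easy direction: if $(t_1,t_2)$ is a decomposition of $t$ with $t_1$ representing $u$ and $t_2$ representing $v$, then $t$ represents $uv$. This should follow by a routine induction on the structure of $t$, checking that each clause in the definition of decomposition is compatible with $\theta$ being a homomorphism. For the product case $t=t_1'\cdot t_2'$ one uses the inductive hypothesis on whichever side is being decomposed; for the case $t=s^\omega$ one uses that $s^\omega$ maps under $\theta$ to the idempotent generated by $\theta(s)$, so that a decomposition of the form $(s^k\cdot s_1,\,s_2\cdot s^\ell)$ with $k+\ell$ involving $\omega$ and $(s_1,s_2)$ a decomposition of $s$ multiplies out correctly. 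This direction is mechanical.

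The substantive direction is the converse: every factorization $uv$ represented by $t$ arises from a decomposition of $t$. I would again induct on the structure of $t$. The base case (a letter or $1$) is immediate since the only ways to factor are trivial. For $t=t_1'\cdot t_2'$, write $x=\theta(t_1')$ and $y=\theta(t_2')$, so $uv=xy$ in $\Om AA$. Here is where \textbf{equidivisibility} enters: since $\Om AA$ is equidivisible, there is $w\in(\Om AA)^1$ with either $u=xw,\ wv=y$ or $uw=x,\ v=wy$. In the first case I apply the inductive hypothesis to $t_2'$ and the factorization $wv=y$ to obtain a decomposition $(s_1,s_2)$ of $t_2'$ with $s_1$ representing $w$ and $s_2$ representing $v$; then $(t_1'\cdot s_1,\,s_2)$ is the desired decomposition of $t$. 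The other case is symmetric.

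The hard part will be the case $t=s^\omega$. Here $\theta(t)=e$, the idempotent generated by $\theta(s)$, and we must show that any factorization $e=uv$ with $u,v\in\Om AA$ comes from a decomposition $(s^k\cdot s_1,\,s_2\cdot s^\ell)$. The idea is to analyze where the ``cut'' between $u$ and $v$ falls relative to the infinite product structure of $s^\omega$. I expect to argue that $u$ is a prefix of the infinite power and $v$ a suffix, using that multiplication in $\Om AA$ is an \emph{open} mapping together with equidivisibility to locate the cut inside a specific occurrence of the factor $\theta(s)$ (or at one of the boundaries). Concretely, one writes $\theta(s)^\omega=\theta(s)^k\cdot\theta(s)^\omega$ and $\theta(s)^\omega=\theta(s)^\omega\cdot\theta(s)^\ell$ and uses equidivisibility repeatedly (or a limiting/compactness argument, since these are profinite elements) to show the cut must respect a single factor $\theta(s)$, reducing to the product case applied to $s$ itself and yielding a decomposition $(s_1,s_2)$ of $s$. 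The openness of multiplication is what guarantees that the prefix/suffix split can be taken to align with the recursively defined decompositions rather than landing in some ``infinitesimal'' position incompatible with them; controlling this alignment, and verifying that the resulting $k,\ell$ obey the constraint that at least one equals $\omega$, is the main obstacle.
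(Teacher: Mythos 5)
Your easy direction and your product case coincide with the paper's own proof (equidivisibility of $\Om AA$ plus the induction hypothesis applied to one of the two factors), and both are correct as written. The genuine gap is exactly where you flag it: the case $t=s^\omega$ is not proved, and the mechanism you sketch for it would not work as stated. Your plan is to locate the cut ``inside a specific occurrence of the factor $\theta(s)$ (or at one of the boundaries)'', but the cut need not sit at any finite occurrence at all: writing $w=\theta(s)$, the equality $w^\omega = w^\omega\cdot w^\omega$ is a legitimate factorization of the element represented by $s^\omega$, and the decomposition it must come from has \emph{both} exponents equal to $\omega$. So no argument that pushes the cut across copies of $w$ one at a time, by repeated equidivisibility applied to $w^\omega = w^k\cdot w^\omega$, can terminate at a designated copy; there is no decreasing parameter along such repetitions, and equidivisibility alone never tells you when to stop.

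What is missing is the paper's key move: approximate $s^\omega$ by \emph{finite} powers and use openness of multiplication in the form of a lifting-of-factorizations lemma (the paper invokes Lemma~2.5 of Almeida and Costa, \emph{Infinite-vertex free profinite semigroupoids and symbolic dynamics}). Since $w^n\to uv$ in $\Om AA$ and multiplication is open, there is a strictly increasing sequence $(n_k)_k$ and factorizations $w^{n_k}=u_kv_k$ with $u_k\to u$ and $v_k\to v$. The finite power $s^{n_k}$ is built from $s$ by multiplications only, so the statement for it follows from the induction hypothesis on $s$ by iterating your product-case argument; the resulting decomposition of $s^{n_k}$ necessarily has the shape $t_{k,1}=s^{\ell_k}\cdot t'_{k,1}$ and $t_{k,2}=(\cdots(t'_{k,2}\cdot s)\cdots)\cdot s$ with $(t'_{k,1},t'_{k,2})$ a decomposition of $s$ and $n_k-\ell_k-1$ trailing multiplications. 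Passing to a subsequence along which the represented words $w_{k,1},w_{k,2}$ converge to some $w_1,w_2$ and along which each of the exponent sequences $(\ell_k)_k$ and $(n_k-\ell_k-1)_k$ is constant or strictly increasing, continuity of multiplication yields $u=w^\ell w_1$, $v=w_2w^m$ and $w=w_1w_2$; the induction hypothesis on $s$ applied to $w=w_1w_2$ gives a decomposition $(s_1,s_2)$ of $s$, and $(s^\ell\cdot s_1,\,s_2\cdot s^m)$ is the required decomposition of $s^\omega$. Note that the constraint you single out as the main obstacle, that at least one of the exponents be $\omega$, is automatic in this argument: $\ell_k+(n_k-\ell_k-1)=n_k-1\to\infty$, so at least one of the two exponent sequences is unbounded, hence strictly increasing after the subsequence extraction, and the corresponding limit exponent is $\omega$.
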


\begin{proof}
  That a decomposition of~$t$ yields a factorization of the
  $\omega$-word in~\omo AA represented by~$t$ amounts to a
  straightforward induction on the construction of~$t$ as an
  $\omega$-term. For the converse, we also proceed by induction on the
  construction of $t$ as an $\omega$-term. At the basis of the
  induction are the identity term~$1$
  and the letters, for which the result is obvious: indeed, both 1 and
  the letters admit no nontrivial factorizations in $\Om AA$ and this
  corresponds exactly to the decompositions allowed in these cases.
    
  Suppose that $t=s_1 \cdot s_2$ is obtained by multiplying two (shorter)
  terms and let $w_i$ be the $\omega$-word in~\omo AA represented by
  $s_i$ ($i=1,2$). From the equality $w_1w_2=uv$, by equidivisibility
  of~\Om AA we deduce that there is some pseudoword $z \in \Om AA$ such that
  either $w_1=uz$ and $zw_2=v$, or $w_1z=u$ and $w_2=zv$. In the first
  case, applying the induction hypothesis to $s_1$, we obtain a
  decomposition $(s_{1,1},s_{1,2})$ of $s_1$ such that $s_{1,1}$ represents
  $u$ and $s_{1,2}$ represents~$z$. Hence, the $\omega$-terms $t_1=s_{1,1}$
  and $t_2=s_{1,2}\cdot s_2$ represent $u$ and~$v$, respectively. The
  other case is similar.

  Suppose next that $t=s^\omega$ where $s$ is a (shorter)
  $\omega$-term. In view of the preceding case, we may assume
  inductively that the result holds for each $\omega$-term $s^n$ with
  $n$ a positive integer. Let $w$ be the $\omega$-word in~\omo AA
  represented by~$s$. Since the sequence $(w^n)_n$ converges to the
  product $uv$ and the multiplication in~\Om AA is open, there is a
  strictly increasing sequence of indices $(n_k)_k$ such that there is
  a factorization $w^{n_k}=u_kv_k$ with $\lim u_k=u$ and $\lim v_k=v$
  (cf.~\cite[Lemma~2.5]{Almeida&ACosta:2007a}). By the induction
  hypothesis applied to $s^{n_k}$, for each $k$ there is a decomposition
  $(t_{k,1},t_{k,2})$ of~$s^{n_k}$ such that $t_{k,1}$ represents
  $u_k$ and $t_{k,2}$ represents~$v_k$. Moreover, by the definition of
  decomposition and a simple induction, for each $k$ there exists
  $\ell_k$, with $0 \leq \ell_k \leq n_k - 1$, such that
  $t_{k,1}=s^{\ell_k}\cdot t'_{k,1}$ and
  $t_{k,2} = (\dots ((t'_{k,2} \cdot s) \cdot s) \dots ) \cdot s$, where
  $(t'_{k,1},t'_{k,2})$ is a decomposition of~$s$ and in the latter term
  the number of multiplications by $s$ is $n_k-\ell_k-1$. Let $w_{k,i}$ be
  the $\omega$-word in~\omo AA represented by $t'_{k,i}$ ($i=1,2$).
  Then the equalities $u_k = w^{\ell_k}\cdot w_{k,1}$ and
  $v_k = w_{k,2} \cdot w^{n_k-\ell_k-1}$ hold in $\omo AA$,
  and the decomposition of $s$ yields the factorization
  $w = w_{k,1} w_{k,2}$. Up to taking a subsequence of $(n_k)_k$,
  we may assume that each of the
  sequences $(w_{k,i})_k$ converges to some $w_i$ ($i=1,2$) and that
  each of the sequences $(\ell_k)_k$ and $(n_k-\ell_k-1)_k$ is either
  constant or strictly increasing.
  Let $\ell$ denote the common value of $\ell_k$ if the sequence
  $(\ell_k)_k$ is constant, and $\omega$ if the sequence is strictly increasing.
  Similarly, let $m$ denote either the common value of $n_k-\ell_k-1$
  or $\omega$. Continuity of
  multiplication in~\Om AA yields the equalities $u=w^\ell w_1$,
  $v=w_2w^m$, and $w=w_1w_2$. The latter equality and the induction
  hypothesis applied to the $\omega$-term $s$ provide a decomposition
  $(s_1,s_2)$ of $s$ such that $s_i$ represents $w_i$ ($i=1,2$). It remains
  to take $t_1=s^\ell \cdot s_1$ and $t_2=s_2 \cdot s^m$ to obtain
  the required decomposition.
\end{proof}

A subset $X$ of a semigroup $S$ is said to be \emph{factorial in~$S$}
if $st\in X$ with $s,t\in S$ implies $s,t\in X$. For an implicit
signature $\sigma$, a pseudovariety \pv V is said to be
\emph{$\sigma$-factorial} if \oms AV is factorial in~\Om AV for every
finite set~$A$.

The following result is an immediate application of
Theorem~\ref{t:omega-term-decomposition}. It was first proved in
\cite{Almeida&Costa&Zeitoun:2015} as a by-product of a
language-theoretical proof of the correctness of McCammond's algorithm
for solving the $\omega$-identity problem for~\pv A
\cite{McCammond:1999a}. An alternative proof and
a generalization to all Burnside pseudovarieties
$B_n=\op x^{\omega+n}=x^\omega\cl$ can be found
in~\cite{Almeida&Costa&Zeitoun:2015a}.

\begin{Cor}
  \label{c:factoriality}
  The pseudovariety \pv A is $\omega$-factorial.\qed
\end{Cor}

Combining Corollary~\ref{c:factoriality} with
Corollary~\ref{c:lifting-factorizations}, we obtain the following result.

\begin{Cor}
  \label{c:lifting-omega-factorizations}
  Let \pv V be a polynomially closed pseudovariety of aperiodic
  ordered monoids and let $u,v\in\omo AA$. If the inequality $u\le v$
  holds in~\pv V then, for every factorization $u=u_1\cdots u_n$,
  there is a factorization $v=v_1\cdots v_n$ into $\omega$-words such
  that each inequality $u_i\le v_i$ holds in~\pv V
  ($i=1,\ldots,n$).\qed
\end{Cor}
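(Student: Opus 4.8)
The plan is to combine the factorization-lifting result of Corollary~\ref{c:lifting-factorizations} with the $\omega$-factoriality of~\pv A from Corollary~\ref{c:factoriality}. Two small mismatches must be reconciled: Corollary~\ref{c:lifting-factorizations} is stated for the free profinite monoid \Om AM, whereas here $u$, $v$ and the factors $u_i$ lie in \Om AA; and that corollary a priori only produces factors of~$v$ in \Om AA, which need not be $\omega$-words.

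First I would reduce to \Om AM. Let $\pi\colon\Om AM\to\Om AA$ be the canonical (onto) continuous homomorphism fixing~$A$. Using surjectivity of~$\pi$, choose preimages $\tilde u_i\in\Om AM$ of the $u_i$ and a preimage $\tilde v\in\Om AM$ of~$v$, and set $\tilde u=\tilde u_1\cdots\tilde u_n$, so that $\pi(\tilde u)=u$. Since every monoid of~\pv V is aperiodic, each continuous homomorphism from \Om AM into a member of~\pv V factors through~$\pi$; hence validity in~\pv V of an inequality between elements of \Om AM depends only on their $\pi$-images, and conversely such an inequality descends to one between the images. In particular $\tilde u\le\tilde v$ holds in~\pv V because $\pi(\tilde u)=u\le v=\pi(\tilde v)$ does.

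Next I would apply Corollary~\ref{c:lifting-factorizations} to the factorization $\tilde u=\tilde u_1\cdots\tilde u_n$ along the inequality $\tilde u\le\tilde v$, obtaining a factorization $\tilde v=\tilde v_1\cdots\tilde v_n$ in \Om AM with each inequality $\tilde u_i\le\tilde v_i$ valid in~\pv V. Putting $v_i=\pi(\tilde v_i)$ and applying~$\pi$ gives $v=v_1\cdots v_n$ in \Om AA, and the remark above yields $u_i\le v_i$ in~\pv V for every~$i$.

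Finally, it remains to check that each $v_i$ is an $\omega$-word, and this is exactly where Corollary~\ref{c:factoriality} enters: \omo AA is factorial in \Om AA. Since $v=v_1\cdot(v_2\cdots v_n)$ belongs to \omo AA, factoriality gives $v_1\in\omo AA$ and $v_2\cdots v_n\in\omo AA$; iterating along the product shows $v_i\in\omo AA$ for all~$i$. Thus $v=v_1\cdots v_n$ is the desired factorization into $\omega$-words. The one step requiring genuine care is the passage through~$\pi$, namely transporting the inequality faithfully in both directions by means of aperiodicity; once the factorization of~$v$ in \Om AA is secured, the factoriality argument is immediate.
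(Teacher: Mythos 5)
Your proof is correct and takes essentially the same route as the paper, which states the corollary as an immediate combination of Corollary~\ref{c:lifting-factorizations} (to lift the factorization along the inequality) with Corollary~\ref{c:factoriality} (to see that the resulting factors of~$v$ are $\omega$-words). The only additional content is your explicit transfer between \Om AM and \Om AA through the projection~$\pi$, justified by the fact that every continuous homomorphism into a member of the aperiodic pseudovariety~\pv V factors through~$\pi$; the paper leaves this bridging step implicit, and you handle it correctly.
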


\section{Repetitions in \texorpdfstring{$\omega$}{omega}-words}
\label{sec:repetitions}

This section is devoted to establishing another reflection of the
intuitively expected phenomenon that there are not many ways to
factorize an $\omega$-word. A precise formulation of how such
factorizations are obtained is already given by
Theorem~\ref{t:omega-term-decomposition}. For application in the next
section, we also need the following repetition result for iterated
factorizations.

\begin{Prop}
  \label{p:repetitions}
  Suppose, for each $i \geq 0$, $v_i, x_i$ are $\omega$-words in~\omo
  AA such that $v_i = x_i v_{i+1}$. Then there exist indices $i$ and
  $j$ with $i \leq j$ such that $v_i = (x_i\cdots x_j)^\omega v_{j+1}$.
\end{Prop}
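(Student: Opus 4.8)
The plan is to first replace the target identity by a simpler \emph{absorption} equation. Writing $z=x_i\cdots x_j$, the hypotheses give $v_i=z\,v_{j+1}$, and since \pv A is aperiodic (so $z^{\omega+1}=z^\omega$ in \omo AA) one checks that $v_i=(x_i\cdots x_j)^\omega v_{j+1}$ is equivalent to $z\,v_i=v_i$: indeed, if $zv_i=v_i$ then $z^nv_i=v_i$ for all $n$, hence $z^\omega v_i=v_i$, and $z^\omega v_i=z^\omega z\,v_{j+1}=z^\omega v_{j+1}$; conversely $v_i=z^\omega v_{j+1}$ yields $zv_i=z^{\omega+1}v_{j+1}=z^\omega v_{j+1}=v_i$. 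Thus it suffices to produce indices $i\le j$ with $(x_i\cdots x_j)\,v_i=v_i$. Note that each $v_i$ is a suffix of $v_0$, since $v_0=x_0\cdots x_{i-1}v_i$, and that the principal left ideals increase: $v_0\leq_{\mathcal L}v_1\leq_{\mathcal L}\cdots$.

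For a single finite aperiodic monoid the absorption equation is easy: the images of the $v_i$ must repeat, and $\varphi(v_i)=\varphi(v_{j+1})$ with $i\le j$ already forces $\varphi(x_i\cdots x_j)\varphi(v_i)=\varphi(v_i)$. The difficulty is \emph{uniformity}: the pair $(i,j)$ produced this way depends on the monoid, and a pure compactness or Ramsey extraction only delivers an idempotent pseudoword $e$ for which the absorption holds \emph{in the limit}; the monogenic case $x_i=a$, where $x_i\cdots x_j=a^{j-i+1}$ is never exactly idempotent yet the conclusion forces $v_i=a^\omega$ eventually, shows that no purely topological argument can close the gap. I would therefore exploit the finiteness of $\omega$-words through Theorem~\ref{t:omega-term-decomposition}. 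Fix an $\omega$-term $t$ representing $v_0$; by that theorem each suffix $v_i$ is represented by the second component $\beta_i$ of a decomposition of $t$. Reading off the recursive definition of decompositions, every such $\beta_i$ is determined by a \emph{node} of the finite tree $t$ together with a tuple of exponents in $\mathbb N\cup\{\omega\}$, one for each $\omega$-power under which the chosen node lies.

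Since $t$ has finitely many nodes, a pigeonhole argument gives an infinite set of indices on which the node is constant, so that the corresponding $\beta_i$ range over a single family parameterized by exponent tuples in $(\mathbb N\cup\{\omega\})^d$. Along this subsequence the relation $v_i=x_iv_{i+1}$ translates into monotone behaviour of these tuples, and the well-foundedness of $\mathbb N$ (with $\omega$ acting as an absorbing top element) forbids an infinite strict descent, so the exponent tuple eventually stabilizes. I expect the main obstacle to be the final step: showing that once the node and the exponents have stabilized between two indices $i\le j$, the stripped block $x_i\cdots x_j$ necessarily lies inside a region governed by an exponent equal to~$\omega$ and is therefore absorbed, yielding exactly $(x_i\cdots x_j)\,v_i=v_i$. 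Matching the combinatorics of the decomposition exponents to this algebraic absorption, especially in the presence of several nested $\omega$-powers, is where the real work lies; the factoriality of \pv A (Corollary~\ref{c:factoriality}) is what guarantees throughout that the factors obtained remain $\omega$-words.
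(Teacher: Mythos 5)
Your opening reduction is correct and worthwhile: in \omo AA, since $z^{\omega+1}=z^\omega$, the target identity $v_i=(x_i\cdots x_j)^\omega v_{j+1}$ is indeed equivalent to the absorption equation $(x_i\cdots x_j)\,v_i=v_i$, and your starting point --- representing the suffixes $v_i$ via Theorem~\ref{t:omega-term-decomposition} as second components of decompositions of a fixed term and exploiting the finiteness of the tree --- is also how the paper begins. But the combinatorial engine you propose fails. The claim that the exponent tuples behave monotonically and ``eventually stabilize'' is false, and the statement it is meant to deliver (a literal repetition $v_i=v_{j+1}$, from which you would derive absorption) is false as well. The paper's own running example exhibits both failures: for $v_0=(a^2b^\omega)^\omega ab^\omega$, $v_i=b^iv_0$ and $x_i=b^i(a^2b^\omega)^\omega$, the cut sits at the same node (inside the innermost $b^\omega$) for every $i\ge1$, but the exponent there must record the number of leading $b$'s of $v_i$ and hence grows strictly with $i$; correspondingly the $\omega$-words $v_i$ are pairwise distinct (a finite aperiodic monoid counting leading $b$'s up to a threshold separates them). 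Well-foundedness of $\mathbb N$ only forbids infinite \emph{descent}; nothing forbids the tuples from climbing forever, and here they do, because stripping a prefix ending in $(a^2b^\omega)^\omega$ regenerates arbitrarily many leading $b$'s in the remaining suffix --- the intuition that suffixes ``shrink'' is simply wrong in a profinite monoid. (A further, secondary issue: you choose the decompositions of $t$ independently for each $i$, so the ``monotone behaviour'' of tuples along the sequence is not even well-posed; the paper instead decomposes successively, taking $(s_i,t_{i+1})$ to be a decomposition of $t_i$.)

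Because of this, the proposition cannot be reached by pigeonhole on pairs (node, exponent tuple); the repetition that actually exists is of a weaker, recombined kind. This is the content of Lemma~\ref{l:compatible-decompositions}: from the factorizations $v_0=(x_0\cdots x_{i-1})v_i$ one can match a \emph{later} prefix with an \emph{earlier} suffix, $v_0=(x_0\cdots x_{j-1})v_i$ with $i<j$ --- true in the example, since $(a^2b^\omega)^\omega$ absorbs $b^i$ --- and this mix-and-match lemma needs its own induction on the term. On top of it, the paper runs an induction on the measure $\mu$ of Lemma~\ref{l:mu-vs-decompositions} to control how decompositions can preserve the value of $\mu$, splitting into the case where the cut eventually stays inside a fixed left factor (handled by induction on a smaller term) and the case where it crosses an $\omega$-power infinitely often (handled by Lemma~\ref{l:compatible-decompositions} plus an explicit computation that produces exactly the absorption you identified). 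So the ``final step'' you flag as the main obstacle is not where the difficulty lies: the stabilization premise you feed into it never materializes, and any repair along your lines would essentially have to reinvent Lemma~\ref{l:compatible-decompositions} and the $\mu$-induction.
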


As an example, take $v_0=(a^2b^\omega)^\omega ab^\omega$ and consider
the factorizations $v_i=x_iv_{i+1}$ given by the $\omega$-words
$v_i=b^i v_0$ and $x_i=b^i(a^2b^\omega)^\omega$ ($i\ge0$). Note that
there are infinitely many ways to factorize $v_0$. However,
Proposition 6.1 says that some repetition is always possible. In this
concrete example, this is quite easy, because one can take $i=j$ an
arbitrary index. In the general case we need to be more careful to
choose appropriate indices and we need to overcome certain technical
obstacles.

In order to prove this key proposition, we are going to study, instead
of the factorizations $v_i = x_i v_{i+1}$, the corresponding
syntactic decompositions of an $\omega$-term representing $v_0$,
with the aim of finding certain repetitions, which would allow to
repeat several consecutive $\omega$-terms in the decomposition
without changing the resulting $\omega$-word $v_i = x_i\cdots x_j
v_{j+1}$.

\begin{Lemma}
  \label{l:compatible-decompositions}
  Let $v \in \omo AA$ be an $\omega$-word, and for each $i \geq 1$
  let $v = v_i \tilde{v}_i$ be a factorization in~\omo AA.
  Then there exist indices $i$ and $j$ with $i < j$ such that
  $v = v_j \tilde{v}_i$.
\end{Lemma}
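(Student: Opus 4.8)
The plan is to reduce the analytic statement about factorizations of the $\omega$-word $v$ to a finite combinatorial statement about \emph{decompositions} of a single fixed $\omega$-term, and then to close the argument by a pigeonhole phenomenon. First I would fix once and for all an $\omega$-term $t$ representing $v$. For each $i\geq1$, the factorization $v=v_i\tilde v_i$ in $\omo AA$ lets me apply Theorem~\ref{t:omega-term-decomposition} to obtain a decomposition $(t_i,\tilde t_i)$ of $t$ in which $t_i$ represents $v_i$ and $\tilde t_i$ represents $\tilde v_i$. The target then becomes: find indices $i<j$ for which the mixed pair $(t_j,\tilde t_i)$ is \emph{also} a decomposition of $t$. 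Indeed, the easy (``only if'') direction of Theorem~\ref{t:omega-term-decomposition} would then give $v=\theta(t)=v_j\,\tilde v_i$, which is exactly the conclusion.

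To locate such indices I would attach to every decomposition of $t$ a finite invariant, its \emph{shape}. By the inductive definition of decompositions, a decomposition is produced by descending a single root-to-leaf path in the tree of $t$: at a multiplication node one cuts inside the left or the right subtree and keeps the other factor whole; at an $\omega$-power node $s^\omega$ one descends into the base $s$ and attaches exponents $k,\ell\in\mathbb N\cup\{\omega\}$ with at least one of them equal to $\omega$; at the terminal leaf one assigns it to the prefix or to the suffix. The shape records all of this discrete data, except that at each $\omega$-power node it retains only which of $k,\ell$ equals $\omega$ (and which, if either, is finite), forgetting the actual finite exponent. Since $t$ is a finite tree, there are only finitely many shapes, so among the infinitely many indices $i\geq1$ some shape occurs infinitely often; pick two indices $i<j$ realizing it.

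The core of the proof is to show, by structural induction on $t$, that two decompositions of the same shape can be mixed: $(t_j,\tilde t_i)$ is again a decomposition of $t$. At a multiplication node the two decompositions cut into the same subtree, so the claim reduces to that subtree; at a leaf the shape pins down the decomposition completely, so there is nothing to check. The only delicate node is an $\omega$-power node $s^\omega$, where I must verify that the prefix $s^{k_j}\cdot s_1^{(j)}$ coming from $j$ and the suffix $s_2^{(i)}\cdot s^{\ell_i}$ coming from $i$ still form a legal decomposition of $s^\omega$: this requires $(s_1^{(j)},s_2^{(i)})$ to decompose $s$, which is exactly the induction hypothesis for the base, and it requires at least one of $k_j,\ell_i$ to equal $\omega$. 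The latter is where the choice of shape is decisive: the shape fixes which side carries the $\omega$, so the $\omega$-marked side of the mixed pair is inherited from the index that supplied it, and the constraint ``at least one exponent is $\omega$'' survives the mixing no matter what the forgotten finite exponents were.

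I expect this $\omega$-power case to be the only real obstacle. It is the single place where the unbounded parameters $k,\ell$ enter, and the whole strategy hinges on the shape being coarse enough to be finite yet fine enough to guarantee legality of the mixed decomposition. Once the induction is complete, the mixed pair $(t_j,\tilde t_i)$ is a decomposition of $t$, and Theorem~\ref{t:omega-term-decomposition} delivers $v=v_j\tilde v_i$ with $i<j$.
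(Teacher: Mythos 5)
Your proposal is correct, but it takes a genuinely different route from the paper's proof. The paper inducts directly on the semantic statement about factorizations of the $\omega$-word: for $t = t_1\cdot t_2$ it invokes equidivisibility of $\Om AA$ to split each factorization $v=v_i\tilde v_i$ across the two factors, passes to an infinite subfamily that splits on the same side, and applies the induction hypothesis to the resulting factorizations of the corresponding subword; for $t=s^\omega$ it extracts decompositions via Theorem~\ref{t:omega-term-decomposition}, normalizes so that $\ell_i=\omega$ for all $i$, applies the induction hypothesis to the factorizations $u=w_i\tilde w_i$ of the base word, and closes with the computation $u^{k_j}w_j\tilde w_iu^\omega=u^\omega$. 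You instead cross from semantics to syntax exactly once (hard direction of Theorem~\ref{t:omega-term-decomposition}), run a single global pigeonhole over the finitely many ``shapes'', prove a purely syntactic mixing lemma by structural induction on the term, and cross back (easy direction). Your route buys modularity: equidivisibility is never used directly, no computations with $\omega$-words are needed, and the one delicate point --- that the constraint ``at least one exponent equals $\omega$'' survives mixing because the shape records which exponent carries $\omega$ --- is identified and resolved correctly. The paper's route avoids formalizing shapes and the mixing lemma, at the cost of reapplying an ``infinitely often'' refinement inside each induction step. Two cosmetic remarks: the direction of Theorem~\ref{t:omega-term-decomposition} that turns a decomposition into a factorization is the ``if'' direction, not the ``only if'' direction as you call it; and since a given pair of terms could in principle arise from more than one descent of the tree, the shape should formally be attached to a chosen derivation of each decomposition rather than to the pair itself --- fixing one derivation per index, the pigeonhole argument goes through verbatim.
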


\begin{proof}
  Let $t$ be an $\omega$-term representing~$v$. We proceed by
  induction on the construction of~$t$. If $t$ is a letter from $A$,
  then the only factorizations of $v$ are $1 \cdot v$ and $v \cdot 1$
  by Theorem~\ref{t:omega-term-decomposition}, and the claim is
  obvious.

  In case $t = t_1 \cdot t_2$, denote by $u_1$ and $u_2$ the $\omega$-words
  represented by $t_1$ and $t_2$, respectively. By equidivisibility,
  for each $i$ there exists $y_i \in \omo AA$ such that either $u_1 =
  v_i y_i$ and $y_i u_2=\tilde{v}_i$, or $u_1 y_i=v_i$ and $u_2 = y_i
  \tilde{v}_i$. By symmetry, we may assume that the former case occurs
  infinitely often. Applying the induction hypothesis to the
  factorizations of $u_1$, we obtain indices $i$ and $j$ with $i < j$
  such that $u_1 = v_j y_i$. It follows that $v = u_1 u_2 = v_j y_i
  u_2 = v_j \tilde{v}_i$, as required.

  Finally, consider the case $t = s^{\omega}$. According to
  Theorem~\ref{t:omega-term-decomposition}, for each $i$ there exists
  a decomposition $(s_{i,1},s_{i,2})$ of the $\omega$-term $s$ and
  exponents $k_i,\ell_i$, that are either non-negative integers
  or~$\omega$, such that $v_i$ is represented by the $\omega$-term
  $s^{k_i} \cdot s_{i,1}$ and $\tilde{v}_i$ is represented by the
  $\omega$-term $s_{i,2} \cdot s^{\ell_i}$. Since for each $i$ at
  least one of $k_i$ and $\ell_i$ is equal to $\omega$, without loss
  of generality we may assume that $\ell_i = \omega$ for all $i$.
  Consider the $\omega$-words $w_i$ and $\tilde{w}_i$ represented by
  the $\omega$-terms $s_{i,1}$ and $s_{i,2}$, respectively. Then, all
  products $w_i \tilde{w}_i$ are equal to the $\omega$-word $u$
  represented by $s$. Using the induction hypothesis, we obtain
  indices $i$ and $j$ with $i < j$ such that $u = w_j \tilde{w}_i$.
  Then $v = u^{\omega} = u^{k_j} w_j \tilde{w}_i u^{\omega} = v_j
  \tilde{v}_i$, which concludes the proof.
\end{proof}

For the proof of Proposition~\ref{p:repetitions}, it is convenient to
introduce a measure of depth of $\omega$-terms. With this aim, let $t$
be an arbitrary $\omega$-term.
In terms of the tree of~$t$, we consider two parameters:
\begin{itemize}
\item the maximum number $\mu_\omega(t)$ of occurrences of the
  $\omega$-power in a branch of the tree of~$t$;
\item for each branch of the tree of~$t$ with $\mu_\omega(t)$
  occurrences of the $\omega$-power, let $n$ be the number of right
  descendants above the top occurrence of the $\omega$-power in the
  branch; 
  the number $\mu_\ell(t)$ is the maximum of all such $n$
  (if $\mu_\omega(t) = 0$, then $\mu_\ell(t)$ is equal to
  the maximum number of right descendants in a branch).
\end{itemize}
Let $\mu(t)$ stand for the pair $(\mu_{\omega}(t),\mu_{\ell}(t))$.
For example, for the term $t$ depicted in
Figure~\ref{fig:tree}, we have $\mu(t)=(2,0)$, while if the term $s$ is
obtained by replacing in~$t$ the rightmost leaf $b$ by the term
$b^\omega$, then $\mu(s)=(2,1)$.

Using the lexicographic ordering $\leq$ of pairs of non-negative
integers, the definition of $\mu$
can be rephrased inductively with respect to the structure of terms
as follows:
$\mu(1) = \mu(a) = (0,0)$ for all $a \in A$; $\mu(s^{\omega})
= (\mu_{\omega}(s)+1,0)$; $\mu(t_1 \cdot s^{\omega}) =
\max\{\mu(t_1),\mu(s^{\omega})\}$; $\mu(t_1 \cdot t_2) =
\max\{\mu(t_1),\mu(t_2)+(0,1)\}$ if $t_2$ is not of the form $s^{\omega}$,
where the addition of pairs is taken component-wise.

\begin{Lemma}
  \label{l:mu-vs-decompositions}
  If $t$ is an $\omega$-term and $(\bar{t},t')$ is a decomposition
  of~$t$, then $\mu(t)\ge\mu(t')$ and the equality can only occur in
  the following four cases:
  \begin{enumerate}
    \item \label{item:mu-vs-decompositions-1}$t$ is $1$ or a letter;
    \item \label{item:mu-vs-decompositions-2}$t = s_1 \cdot s_2$ and
      $(\bar{t},t') = (\bar{s},s' \cdot s_2)$ for some decomposition
      $(\bar{s},s')$ of $s_1$;
    \item \label{item:mu-vs-decompositions-3}$t = s^{\omega}$ and
      $(\bar{t},t') = (s^k \cdot \bar{s},s' \cdot s^{\omega})$ for
      some decomposition $(\bar{s},s')$ of~$s$ and
      $k \in \{0,1,2,\ldots\} \cup \{\omega\}$;
    \item \label{item:mu-vs-decompositions-4}$t = s_1 \cdot
      s^{\omega}$ and $(\bar{t},t') =
      (s_1 \cdot(s^k \cdot \bar{s}),s' \cdot s^{\omega})$ for
      some decomposition $(\bar{s},s')$ of~$s$ and
      $k \in \{0,1,2,\ldots\} \cup \{\omega\}$.
  \end{enumerate}
\end{Lemma}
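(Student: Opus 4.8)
The plan is to prove both assertions---the inequality $\mu(t)\ge\mu(t')$ and the restriction on the equality cases---simultaneously, by structural induction on the $\omega$-term $t$, following precisely the inductive definitions of decompositions and of $\mu$ recalled above. The single recurring fact I will rely on is that first coordinates of $\mu$ are insensitive to the two multiplication clauses: directly from the definition, $\mu_\omega(a\cdot b)=\max\{\mu_\omega(a),\mu_\omega(b)\}$ for all $\omega$-terms $a,b$, since the correction term $(0,1)$ affects only the second coordinate. Iterating this, $\mu_\omega(s^\ell)=\mu_\omega(s)$ for every integer $\ell\ge1$, whereas $\mu_\omega(s^\omega)=\mu_\omega(s)+1$ is strictly larger; together with the induction hypothesis $\mu(s')\le\mu(s)$ for a decomposition $(\bar s,s')$ of $s$, this is what will make every decomposition of the second symmetric form with a finite exponent strict already in the first coordinate.

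For the base case, when $t$ is $1$ or a letter its only decompositions are $(1,t)$ and $(t,1)$, both giving $\mu(t')=(0,0)=\mu(t)$, which is exactly case~(\ref{item:mu-vs-decompositions-1}). In the inductive step I distinguish the shape of~$t$. Suppose first $t=t_1\cdot t_2$ with $t_2$ not an $\omega$-power. A decomposition of the first symmetric form, $(\bar s,\,s'\cdot t_2)$ with $(\bar s,s')$ a decomposition of $t_1$, gives $\mu(t')=\max\{\mu(s'),\mu(t_2)+(0,1)\}\le\max\{\mu(t_1),\mu(t_2)+(0,1)\}=\mu(t)$ by the induction hypothesis $\mu(s')\le\mu(t_1)$; this is case~(\ref{item:mu-vs-decompositions-2}), where equality is permitted. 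A decomposition of the second symmetric form yields $t'=s_2$ for a decomposition $(s_1,s_2)$ of~$t_2$, whence $\mu(t')=\mu(s_2)\le\mu(t_2)<\mu(t_2)+(0,1)\le\mu(t)$, a strict inequality, so no equality arises outside the listed cases here.

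Next suppose the right factor is an $\omega$-power, either $t=t_1\cdot s^\omega$ or the standalone $t=s^\omega$. Decompositions of the first symmetric form again fall under case~(\ref{item:mu-vs-decompositions-2}) and satisfy $\mu(t')=\max\{\mu(s''),\mu(s^\omega)\}\le\mu(t)$ by the induction hypothesis $\mu(s'')\le\mu(t_1)$. For the remaining decompositions I use the form $(s^k\cdot q_1,\,q_2\cdot s^\ell)$ of~$s^\omega$ and split on~$\ell$. If $\ell=\omega$, then $\mu(t')=\mu(q_2\cdot s^\omega)=\max\{\mu(q_2),\mu(s^\omega)\}=\mu(s^\omega)$, using $\mu(q_2)\le\mu(s)<\mu(s^\omega)$; comparing with $\mu(t)=\max\{\mu(t_1),\mu(s^\omega)\}$ (resp.\ $\mu(t)=\mu(s^\omega)$) gives $\mu(t')\le\mu(t)$, and this is precisely case~(\ref{item:mu-vs-decompositions-4}) (resp.\ case~(\ref{item:mu-vs-decompositions-3})). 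If instead $\ell$ is finite, the opening computation yields $\mu_\omega(t')=\mu_\omega(q_2\cdot s^\ell)=\max\{\mu_\omega(q_2),\mu_\omega(s)\}=\mu_\omega(s)<\mu_\omega(s^\omega)\le\mu_\omega(t)$, so the first coordinates alone force $\mu(t')<\mu(t)$, excluding equality.

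The main difficulty is not any individual estimate but the bookkeeping: each of the four equality cases in the statement must be matched to exactly one branch of the induction---case~(\ref{item:mu-vs-decompositions-2}) to the first symmetric form of a product, and cases~(\ref{item:mu-vs-decompositions-3}) and~(\ref{item:mu-vs-decompositions-4}) to decompositions of an $\omega$-power with $\ell=\omega$---while every remaining branch (the second symmetric form of a product with non-$\omega$-power right factor, and decompositions of an $\omega$-power with finite~$\ell$) must be shown strict. The crux that isolates $\ell=\omega$ as the sole equality-producing exponent is the first-coordinate drop $\mu_\omega(q_2\cdot s^\ell)=\mu_\omega(s)<\mu_\omega(s)+1=\mu_\omega(s^\omega)$ for finite $\ell$; everything else reduces to routine comparisons of maxima in the lexicographic order.
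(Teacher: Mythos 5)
Your proof is correct and follows essentially the same route as the paper's: a structural induction on~$t$ organized by the shape of~$t$ and the form of the decomposition, with cases \eqref{item:mu-vs-decompositions-2}--\eqref{item:mu-vs-decompositions-4} matched to exactly the branches you identify, and strictness established in all remaining branches. The differences are two local simplifications, both of which buy something. First, for decompositions of an $\omega$-power with finite exponent~$\ell$, the paper proves strictness through an auxiliary computation of $\mu(s^k)$ (showing $\mu(s^k)\le\mu(s)+(0,1)$ and bounding $\mu(s'\cdot s^k)\le\max\{\mu(s'),(\mu_\omega(s),\mu_\ell(s)+2)\}<\mu(s^\omega)$), whereas you observe that the first coordinate alone already drops, $\mu_\omega(q_2\cdot s^\ell)=\mu_\omega(s)<\mu_\omega(s)+1=\mu_\omega(s^\omega)$, which avoids any second-coordinate bookkeeping. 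Second, for $t=s_1\cdot s^\omega$ with a decomposition of the second symmetric form, the paper invokes the equality-classification part of the induction hypothesis applied to the subterm $s^\omega$ (``the inequality is strict unless the decomposition has the form of case~\eqref{item:mu-vs-decompositions-3}''), whereas you unfold the decomposition of $s^\omega$ directly into $(s^k\cdot q_1,\,q_2\cdot s^\ell)$ and split on~$\ell$; as a result your induction only ever uses the inequality part of the hypothesis, never the classification part, which makes the induction slightly lighter. One pedantic point: when $\ell=0$, the term $q_2\cdot s^0$ denotes $q_2$ by the paper's convention, so your displayed equality $\mu_\omega(q_2\cdot s^\ell)=\mu_\omega(s)$ should there be read as the bound $\mu_\omega(q_2)\le\mu_\omega(s)$; the strictness conclusion is unaffected.
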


\begin{proof}
  We proceed by induction on the construction of the term~$t$.
  The claim is obvious for $t$ equal to $1$ or a letter, which means
  that we fall in Case~(\ref{item:mu-vs-decompositions-1}).

  Suppose that $t = s_1 \cdot s_2$ and $(\bar{t},t') = (\bar{s},s'
  \cdot s_2)$, with $(\bar{s},s')$ a decomposition of $s_1$. Since
  these assumptions fall in Case~(\ref{item:mu-vs-decompositions-2}),
  it is enough to prove the inequality $\mu(t)\ge\mu(t')$. We
  distinguish two cases. For $s_2$ of the form $s^{\omega}$ we get
  \begin{equation*}
  \mu(t) = \max\{\mu(s_1),\mu(s^\omega)\} \geq
  \max\{\mu(s'),\mu(s^\omega)\} = \mu(s' \cdot s_2),
  \end{equation*}
  using the induction hypothesis on the $\omega$-term $s_1$.
  Similarly, if $s_2$ is not of the form $s^{\omega}$, we obtain
  \begin{equation*}
  \mu(t) = \max\{\mu(s_1),\mu(s_2)+(0,1)\} \geq
  \max\{\mu(s'),\mu(s_2)+(0,1)\} %
  = \mu(s' \cdot s_2).
  \end{equation*}
  If $t = s_1 \cdot s_2$ and $(\bar{t},t') = (s_1 \cdot \bar{s},s')$,
  with $(\bar{s},s')$ a decomposition of $s_2$, then we also
  distinguish two cases. For $s_2$ of the form $s^{\omega}$ we get
  \begin{equation*}
  \mu(t) = \max\{\mu(s_1),\mu(s^\omega)\} \geq
  \mu(s^\omega) = \mu(s_2) \geq \mu(s'),
  \end{equation*}
  using the induction hypothesis on the $\omega$-term $s_2$; in fact,
  the induction hypothesis yields that the last inequality is strict
  unless $(\bar{s},s')$ is of the form $(s^k \cdot \tilde{s},s'' \cdot
  s^{\omega})$ for some decomposition $(\tilde{s},s'')$ of~$s$ and $k
  \in \{0,1,2,\ldots\} \cup \{\omega\}$, which
  shows that the equality $\mu(t)=\mu(s')$ only holds in
  Case~(\ref{item:mu-vs-decompositions-4}).
  In the second case, assuming that $s_2$ is not of the form
  $s^{\omega}$, we obtain strict inequality:
  \begin{equation*}
  \mu(t) = \max\{\mu(s_1),\mu(s_2)+(0,1)\} > \mu(s_2) \geq \mu(s').
  \end{equation*}

  If $t = s^{\omega}$ and the decomposition $(\bar{t},t')$ of $t$ has
  $t' = s' \cdot s^{\omega}$, with $(\bar{s},s')$ a decomposition of
  $s$, meaning that Case~(\ref{item:mu-vs-decompositions-3}) holds, then
  \begin{equation*}
  \mu(s' \cdot s^{\omega}) = \max\{\mu(s'),\mu(s^\omega)\} =
  \mu(s^{\omega}),
  \end{equation*}
  because $\mu(s') \leq \mu(s) < \mu(s^\omega)$ by the induction
  hypothesis. Finally, if $t = s^{\omega}$ and the decomposition
  $(\bar{t},t')$ of $t$ has $t' = s' \cdot s^k$, with
  $(\bar{s},s')$ a decomposition of $s$ and $k$ a non-negative
  integer, then 
  \begin{equation*}
  \mu(s' \cdot s^k) \leq
  \max\{\mu(s'),(\mu_{\omega}(s),\mu_{\ell}(s)+2)\} < \mu(s^{\omega}),
  \end{equation*}
  because $\mu(s') \leq \mu(s) < \mu(s^{\omega})$ by the induction
  hypothesis, while a simple induction taking into account that
  $s^{k+1}=s^k \cdot s$ shows that the equality $\mu(s^k) = \mu(s) +
  (0,1)$ holds whenever $k>1$ in case $s$ is not of the form
  $s=r^\omega$, and $\mu(s^k)=\mu(s)$ otherwise.
\end{proof}

We may now proceed with the proof of the main result of this section.

\begin{proof}[Proof of Proposition~\ref{p:repetitions}]
  Let $t_0$ be an $\omega$-term representing~$v_0$. Using
  Theorem~\ref{t:omega-term-decomposition}, choose successively for
  each $i$ some decomposition $(s_i,t_{i+1})$ of the $\omega$-term
  $t_i$ such that $s_i$ represents $x_i$ and $t_{i+1}$ represents
  $v_{i+1}$.
  
  The proof of the proposition proceeds by induction with respect to
  $\mu(t_0)$, and for $\omega$-terms $t_0$ with the same value of
  $\mu(t_0)$ by induction with respect to the size of the term. By
  Lemma~\ref{l:mu-vs-decompositions}, we know that $\mu(t_{i+1}) \leq
  \mu(t_i)$ for every $i \geq 0$. If $\mu(t_i) < \mu(t_0)$ for some $i
  \geq 1$, then the
  statement follows directly from the induction assumption applied to
  $t_i$. Thus, assume that $\mu(t_i) = \mu(t_0)$ for all $i$, which
  implies that all decompositions $(s_i, t_{i+1})$ are of one of the
  four forms described in Lemma~\ref{l:mu-vs-decompositions}.

  If some term $t_i$ is $1$ or a letter, then the same is true for all
  terms $t_i$, and all terms $s_i$, except for at most one, are equal
  to $1$; then the statement is obviously true. It remains to deal
  with the situation when every decomposition $(s_i,t_{i+1})$ is of
  one of the forms \eqref{item:mu-vs-decompositions-2} to
  \eqref{item:mu-vs-decompositions-4}, which is assumed for the
  remainder of the proof. Let us first formulate a useful
  observation.

  \begin{claim}
    Assume that $i$ is such that the decomposition $(s_i, t_{i+1})$ of
    $t_i$ is of the form \eqref{item:mu-vs-decompositions-3} with $t_i
    = s^{\omega}$ or \eqref{item:mu-vs-decompositions-4} with $t_i =
    \bar{t}_i \cdot s^{\omega}$ for some $\omega$\nobreakdash-terms
    $s$ and $\bar{t}_i$.
    Then, for every $j > i$, the decomposition $(s_j,t_{j+1})$
    is of the form \eqref{item:mu-vs-decompositions-2}
    or~\eqref{item:mu-vs-decompositions-4}, 
    and there exist $\omega$\nobreakdash-terms $\bar{t}_j$ for $j
    > i$ such that $t_j = \bar{t}_j \cdot s^{\omega}$. Moreover, the
    inequality $\mu(\bar{t}_{i+1}) < \mu(t_{i+1})$ holds.
  \end{claim}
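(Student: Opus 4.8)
The plan is to establish the three assertions of the claim simultaneously by induction on the index $j \geq i+1$, the guiding idea being that the decompositions of forms \eqref{item:mu-vs-decompositions-3} and \eqref{item:mu-vs-decompositions-4} expose an occurrence of $s^{\omega}$ as the right-hand top-level factor of $t_{i+1}$, and that this exposed $\omega$-power can never be destroyed afterwards without lowering $\mu$, which is forbidden by the standing assumption $\mu(t_k) = \mu(t_0)$ for all $k$.

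First I would settle the base case $j = i+1$. Whether the decomposition $(s_i,t_{i+1})$ of $t_i$ is of form \eqref{item:mu-vs-decompositions-3}, with $t_i = s^{\omega}$, or of form \eqref{item:mu-vs-decompositions-4}, with $t_i = \bar{t}_i \cdot s^{\omega}$, inspection of the two patterns in Lemma~\ref{l:mu-vs-decompositions} shows that in either case $t_{i+1} = s' \cdot s^{\omega}$ for some decomposition $(\bar{s},s')$ of $s$; I then set $\bar{t}_{i+1} := s'$, so that $t_{i+1} = \bar{t}_{i+1} \cdot s^{\omega}$. For the strict inequality $\mu(\bar{t}_{i+1}) < \mu(t_{i+1})$, I would invoke the first part of Lemma~\ref{l:mu-vs-decompositions} to get $\mu(s') \leq \mu(s)$, combine it with $\mu(s) < \mu(s^{\omega})$, which holds because $\mu(s^{\omega}) = (\mu_{\omega}(s)+1,0)$ exceeds $\mu(s) = (\mu_{\omega}(s),\mu_{\ell}(s))$ in the first lexicographic coordinate, and conclude $\mu(t_{i+1}) = \max\{\mu(s'),\mu(s^{\omega})\} = \mu(s^{\omega}) > \mu(s') = \mu(\bar{t}_{i+1})$.

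For the inductive step I would assume $t_j = \bar{t}_j \cdot s^{\omega}$ for some $j > i$ and analyse the decomposition $(s_j,t_{j+1})$, which by the standing assumption is of one of the forms \eqref{item:mu-vs-decompositions-2}, \eqref{item:mu-vs-decompositions-3} or \eqref{item:mu-vs-decompositions-4}. Since $t_j$ is syntactically a product, its top operation being multiplication rather than the $\omega$-power, it is not of the shape $r^{\omega}$, so form \eqref{item:mu-vs-decompositions-3} is excluded and we are left with \eqref{item:mu-vs-decompositions-2} or \eqref{item:mu-vs-decompositions-4}. In form \eqref{item:mu-vs-decompositions-2} the cut is performed inside the left factor $\bar{t}_j$, whence $t_{j+1} = p \cdot s^{\omega}$ with $p$ the right part of a decomposition of $\bar{t}_j$; in form \eqref{item:mu-vs-decompositions-4}, which matches the unique top-level parse $\bar{t}_j \cdot s^{\omega}$ of $t_j$ only with the $\omega$-power being exactly $s^{\omega}$, the cut is performed inside $s^{\omega}$ and again $t_{j+1} = s'' \cdot s^{\omega}$. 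In both cases the right factor $s^{\omega}$ survives, so taking $\bar{t}_{j+1}$ to be the corresponding left part yields $t_{j+1} = \bar{t}_{j+1} \cdot s^{\omega}$ and the decomposition is of form \eqref{item:mu-vs-decompositions-2} or \eqref{item:mu-vs-decompositions-4}, completing the induction.

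The step requiring care is the exclusion of form \eqref{item:mu-vs-decompositions-3} together with the observation that form \eqref{item:mu-vs-decompositions-4}, when applied to $t_j = \bar{t}_j \cdot s^{\omega}$, must use precisely the exposed power $s^{\omega}$. Both facts rest on the four forms of Lemma~\ref{l:mu-vs-decompositions} being determined by the top-level syntactic shape of the term being decomposed, so that the suffix $\cdot\, s^{\omega}$ is stable along the whole tail of the sequence. I do not anticipate a deeper difficulty; the only bookkeeping subtlety is to keep the syntactic equalities of $\omega$-terms distinct from equalities of the $\omega$-words they represent.
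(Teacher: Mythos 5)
Your proposal is correct and follows essentially the same route as the paper: induction on $j$, with the base case $j=i+1$ extracted from the shapes of forms \eqref{item:mu-vs-decompositions-3} and \eqref{item:mu-vs-decompositions-4}, the strict inequality obtained from $\mu(\bar{t}_{i+1})=\mu(s')\le\mu(s)<\mu(s^\omega)\le\mu(t_{i+1})$, and the inductive step ruling out form \eqref{item:mu-vs-decompositions-3} by the syntactic shape of $t_j=\bar{t}_j\cdot s^\omega$ so that the suffix $s^\omega$ persists under forms \eqref{item:mu-vs-decompositions-2} and \eqref{item:mu-vs-decompositions-4}. The only cosmetic difference is that the paper proves the form statement $t_j=\bar{t}_j\cdot s^\omega$ by induction first and then reads off the "form \eqref{item:mu-vs-decompositions-2} or \eqref{item:mu-vs-decompositions-4}" assertion from it, whereas you carry all assertions along simultaneously; the mathematical content is identical.
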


  \begin{proof}[Proof of Claim]
    We prove the statement that $t_j = \bar{t}_j \cdot s^{\omega}$ for
    some $\omega$\nobreakdash-terms $\bar{t}_j$ by induction with respect
    to $j>i$. Note that since $\mu(t_{j+1}) = \mu(t_j)$,
    this form of $t_j$ ensures that, for $j> i$,
    the decomposition $(s_j,t_{j+1})$ of $t_j$ is of the form
    \eqref{item:mu-vs-decompositions-2}
    or~\eqref{item:mu-vs-decompositions-4}.

    By the assumption that the decomposition $(s_i, t_{i+1})$ of $t_i$
    is of the form \eqref{item:mu-vs-decompositions-3} or
    \eqref{item:mu-vs-decompositions-4}, we know that the
    $\omega$-term $t_{i+1}$ has the form $\bar{t}_{i+1} \cdot
    s^\omega$ where $\bar{t}_{i+1}=s'$ in the notation of
    Lemma~\ref{l:mu-vs-decompositions}. Thus, we have proved the base
    of the induction $j=i+1$. Moreover, in both cases the term $s'$ is
    the second component of a decomposition $(\bar{s},s')$ of $s$
    and, therefore, $\mu(\bar{t}_{i+1}) = \mu(s') \leq \mu(s)$ by
    Lemma~\ref{l:mu-vs-decompositions}. Hence we get the last
    statement of the claim, because $\mu(s) < \mu(s^{\omega}) \leq
    \mu(t_{i+1})$.

    To prove the induction step, we first assume that the
    decomposition $(s_j,t_{j+1})$ of $t_j=\bar{t}_j \cdot s^{\omega}$,
    with $j>i$, is of the form \eqref{item:mu-vs-decompositions-2}.
    Then, we have $t_{j+1}=t'=s'\cdot s^\omega$ in the notation of
    case~\eqref{item:mu-vs-decompositions-2} of
    Lemma~\ref{l:mu-vs-decompositions}, where $s^\omega=s_2$.
    Hence, we may take $\bar{t}_{j+1}=s'$ and we are done in this
    case. We may proceed in the same way when we assume that the
    decomposition $(s_j,t_{j+1})$ of $t_j=\bar{t}_j \cdot s^{\omega}$,
    with $j>i$, is of the form \eqref{item:mu-vs-decompositions-4}.
\end{proof}
  
  We distinguish two cases for which the proof proceeds in different
  ways. First, assume that there exists an index $k$ such that for all
  $j \geq k$ the decomposition $(s_j,t_{j+1})$ of $t_j$ is of the
  form~\eqref{item:mu-vs-decompositions-2}, that is, there exist
  $\omega$\nobreakdash-terms $t'$ and $\bar{t}_j$ for $j \geq k$ such
  that $t_j = \bar{t}_j \cdot t'$ and $(s_j, \bar{t}_{j+1})$ is a
  decomposition of $\bar{t}_j$. Choose $k$ to be the least of such
  indices. Denote by $\bar{v}_j$ the $\omega$-word represented
  by~$\bar{t}_j$. Our goal is to apply the induction hypothesis to the
  sequence of factorizations $\bar{v}_j = x_j \bar{v}_{j+1}$ for $j
  \geq k$. This would be possible, once we verify either that
  $\mu(\bar{t}_k) < \mu(t_0)$ or that $\mu(\bar{t}_k) = \mu(t_0)$ and
  the size of $\bar{t}_k$ is smaller than that of~$t_0$. At least one
  of these conditions is certainly true if $k = 0$, since then
  $\bar{t}_k$ is a proper subterm of $t_0$. If $k \geq 1$, then the
  minimality of choice of $k$ ensures that the decomposition $(s_{k-1},
  t_k)$ of $t_{k-1}$ is of the form
  \eqref{item:mu-vs-decompositions-3}
  or~\eqref{item:mu-vs-decompositions-4}. Hence, we may apply the
  Claim with $i = k-1$ to deduce that $\mu(\bar{t}_k) < \mu(t_k)$.
  Since we have also assumed that $\mu(t_k) = \mu(t_0)$, the induction
  hypothesis now provides us with indices $i$ and $j$, with $i < j$,
  such that $\bar{v}_i = (x_i\cdots x_j)^\omega \bar{v}_{j+1}$.
  Multiplication by the $\omega$-word $w$ represented by the
  $\omega$-term $t'$ turns this equality into the required repetition
  $v_i = \bar{v}_i w = (x_i\cdots x_j)^\omega \bar{v}_{j+1} w =
  (x_i\cdots x_j)^\omega v_{j+1}$.

  Second, assume that such an index $k$ does not exist, so that,
  according to the claim,
  for infinitely many indices $i$ the decomposition $(s_i, t_{i+1})$
  is of the form~\eqref{item:mu-vs-decompositions-4}.
  Let $i_1 < i_2 < \cdots$ be all such indices.
  Then for every $m \geq 1$, all decompositions with indices between
  $i_m$ and $i_{m+1}$ are of the form~\eqref{item:mu-vs-decompositions-2}.
  For all $i \geq i_1$, the term $t_i$ can be written as
  $t_i = \bar{t}_i \cdot s^{\omega}$, for a fixed $\omega$-term $s$
  and certain $\omega$-terms $\bar{t}_i$.
  If $i$ is not one of the indices $i_m$, then $(s_i, \bar{t}_{i+1})$
  is a decomposition of $\bar{t}_i$.
  On the other hand, for all indices $i_m$ we have
  $s_{i_m} = \bar{t}_{i_m} \cdot (s^{k_m} \cdot \bar{s}_m)$,
  with $(\bar{s}_m, \bar{t}_{i_m+1})$ a decomposition of $s$
  and $k_m \in \{0,1,2,\ldots\} \cup \{\omega\}$.
  Denoting $\omega$-words in $\omo AA$ represented by $\omega$-terms
  $\bar{t}_i$, $s$ and $\bar{s}_m$ by $\bar{v}_i$, $w$ and $\bar{w}_m$,
  respectively, the above relations between these $\omega$-terms
  translate into the following equalities of $\omega$-words:
  $v_i = \bar{v}_i w^{\omega}$ for all $i \geq i_1$,
  $\bar{v}_i = x_i \bar{v}_{i+1}$ for indices $i$ other than~$i_m$,
  $x_{i_m} = \bar{v}_{i_m} w^{k_m} \bar{w}_m$, and
  $w = \bar{w}_m \bar{v}_{i_m+1}$.
  By Lemma~\ref{l:compatible-decompositions}, there exist indices
  $\ell,n \geq 1$, with $\ell < n$, such that
  $w = \bar{w}_n \bar{v}_{i_{\ell}+1}$.
  We are going to verify that $i_{\ell}+1$ and $i_n$ are the required
  indices $i$ and $j$.
  First we calculate the product of $\omega$-terms $x_i$
  between two consecutive indices $i_m$ as
  \begin{equation*}
  x_{i_m+1} \cdots x_{i_{m+1}} = x_{i_m+1} \cdots x_{i_{m+1}-1}
  \bar{v}_{i_{m+1}} w^{k_{m+1}} \bar{w}_{m+1} =
  \bar{v}_{i_m+1} w^{k_{m+1}} \bar{w}_{m+1},
  \end{equation*}
  using successively the equalities $\bar{v}_i = x_i \bar{v}_{i+1}$ for
  $i = i_{m+1}-1, \dots, i_m+1$.
  Then, we obtain
  $$x_{i_{\ell}+1} \cdots x_{i_n} =
  \bar{v}_{i_{\ell}+1} w^{k_{\ell+1} + \dots + k_n + n - \ell - 1} \bar{w}_n,$$
  using all equalities $w = \bar{w}_m \bar{v}_{i_m+1}$
  for $m = \ell+1, \dots, n-1$.
  Denoting the number $k_{\ell+1} + \dots + k_n + n - \ell - 1$ by $q$,
  we finally calculate
  \begin{align*}
    (x_{i_{\ell}+1} \cdots x_{i_n})^{\omega} v_{i_n+1}
    &= (\bar{v}_{i_{\ell}+1} w^q \bar{w}_n)^{\omega} \bar{v}_{i_n+1} w^{\omega}
    \\
    &= \bar{v}_{i_{\ell}+1} (w^q \bar{w}_n \bar{v}_{i_{\ell}+1})^{\omega}
      w^q \bar{w}_n \bar{v}_{i_n+1} w^{\omega}
    \\
    &= \bar{v}_{i_{\ell}+1} w^{\omega}
    = v_{i_{\ell}+1},
  \end{align*}
  using the equalities $\bar{w}_n \bar{v}_{i_{\ell}+1} =
  \bar{w}_n \bar{v}_{i_n+1} = w$.
\end{proof}

\section{Syntactic proofs of
  \texorpdfstring{$\omega$}{omega}-inequalities}
\label{sec:re-positive-cases}

The aim of this section is to show that the positive cases of the
$\omega$-inequality problem for an aperiodic pseudovariety of ordered
monoids of the form $\pv W=\PolB V$ may be derived from the positive
cases of the $\omega$-inequality problem for~\pv V. Since we deal only
with aperiodic monoids, it is convenient to consider only
$\omega$-inequality problems over~\pv A. By
Proposition~\ref{p:raising-ineq-prob}, the general $\omega$-inequality
problem reduces to this one since the $\omega$-equality problem
for~\pv A is decidable as was shown by McCammond
\cite{McCammond:1999a}. For this reason, all $\omega$-inequalities
considered in this section are taken over~\pv A.

In order to enumerate the positive cases of the $\omega$-inequality
problem for~\pv W, we show that, under suitable assumptions on~\pv V,
if an inequality $u\le v$ of $\omega$-words is valid in~\pv W, then
there is a finite syntactic proof of this fact from a convenient set
of hypotheses, or axioms. This is thus a sort of completeness result
for the choice of the set of axioms. Provided the axioms can be
enumerated, since formal proofs can then be enumerated, so can be the
provable facts.

To fulfill the above program, we need to make precise what kind of
formal proofs we will be considering. By a \emph{formal proof} of an
$\omega$-inequality $u\le v$ from a given set $\Gamma$ of hypotheses,
we mean a finite sequence $u_i\le v_i$ ($i=1,\ldots,n$) of
$\omega$-inequalities such that each member of the sequence satisfies
one of the following conditions:
\begin{itemize}
\item $u_i\le v_i$ is a member of~$\Gamma$;
\item $u_i\le v_i$ is of the form $u_ju_k\le v_jv_k$ with $j,k<i$;
\item $u_i\le v_i$ is of the form $u_j^\omega\le v_j^\omega$ with
  $j<i$;
\item there are $j,k<i$ such that $u_i=u_j$, $v_j=u_k$, and $v_k=v_i$.
\end{itemize}
If there is such a formal proof, then we say that $u\le v$ is provable
from~$\Gamma$ and we write $\Gamma\vdash u\le v$. Note that we do not
include in our proof rules the possibility of making substitutions of
variables by $\omega$-words. We will not need them because we will
consider sets of hypotheses that are closed under such substitutions.
Similarly, we do not need to take into account the possibility of
multiplying both sides of an inequality on the left and on the right
by the same $\omega$-words because we will consider sets of hypotheses
containing all trivial $\omega$-inequalities.

\begin{Prop}
  \label{p:provability}
  Let \pv V be a polynomially closed pseudovariety of aperiodic
  ordered monoids and let $\Gamma$ be the set of all trivial
  $\omega$-inequalities together with all inequalities of the form
  $u^\omega\le u^\omega vu^\omega$, with $u,v \in \omo AA$,
  such that the $\omega$-inequality
  $v\le u$ is valid in~\pv V. If an $\omega$-inequality $u\le v$ is
  valid in $\pv W=\PolB V$, then $\Gamma\vdash u\le v$.
\end{Prop}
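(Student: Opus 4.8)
The plan is to argue by induction on the structure (size) of an $\omega$-term $t$ representing the left-hand side~$u$. The four deduction rules make the provability relation reflexive (the trivial axioms), transitive (the fourth rule) and compatible with both multiplication and the $\omega$-power (the second and third rules), so it will be enough to reassemble a $\pv W$-valid inequality $u\le v$ from provable inequalities on the constituents of~$t$. Throughout I use that $\pv W=\PolB V$ is again polynomially closed and aperiodic, so that Corollary~\ref{c:lifting-omega-factorizations} applies to~$\pv W$; that $\pv V\subseteq\pv W$, so that every inequality valid in~$\pv W$ is valid in~$\pv V$ and may therefore be fed into the nontrivial axioms $u^\omega\le u^\omega vu^\omega$ of~$\Gamma$; and that $u^{\omega+1}=u^\omega$ holds in~$\omo AA$, which is exactly what identifies those axioms with the basis of $\PolB V$ given by Proposition~\ref{p:basis-PolB}.

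The base and product cases are routine. If $t$ is $1$ or a letter then $u$ is a word, and since $\pv W$ contains $\pv{MN}$ the validity of $u\le v$ forces $u=v$, so $\Gamma\vdash u\le v$ by a trivial axiom. If $t=t_1\cdot t_2$ with $t_i$ representing~$u_i$, I use Corollary~\ref{c:lifting-omega-factorizations} to lift $u=u_1u_2$ along $u\le v$ to a factorization $v=v_1v_2$ with each $u_i\le v_i$ valid in~$\pv W$; the induction hypothesis yields $\Gamma\vdash u_i\le v_i$, and the product rule gives $\Gamma\vdash u\le v$.

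The substantial case is $t=s^\omega$, where $u=w^\omega$ with $w$ represented by the shorter term~$s$; the induction hypothesis for~$s$ guarantees $\Gamma\vdash w\le v'$ for every $v'$ with $w\le v'$ valid in~$\pv W$. Using $w\,w^\omega=w^\omega$ and lifting $w^\omega=w\cdot w^\omega$ repeatedly along $w^\omega\le v$ (Corollary~\ref{c:lifting-omega-factorizations}), I produce a sequence of factorizations $v_i=x_iv_{i+1}$ (with $v_0=v$) in which $w\le x_k$ and $w^\omega\le v_k$ are valid in~$\pv W$ for all indices, whence $\Gamma\vdash w\le x_k$ by the induction hypothesis. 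Proposition~\ref{p:repetitions} then supplies indices $i\le j$ with $v_i=(x_i\cdots x_j)^\omega v_{j+1}$. Writing $p=x_i\cdots x_j$, the inequalities $\Gamma\vdash w\le x_k$ combine through the product and $\omega$-power rules into $\Gamma\vdash(w^{\,j-i+1})^\omega\le p^\omega$, which is $\Gamma\vdash w^\omega\le p^\omega$ because $(w^{\,j-i+1})^\omega=w^\omega$ in~$\omo AA$. As $v=(x_0\cdots x_{i-1})\,p^\omega v_{j+1}$, $\Gamma\vdash w^{\,i}\le x_0\cdots x_{i-1}$, and $w^{\,i}w^\omega w^\omega=w^\omega$, the product rule reduces the goal to proving $\Gamma\vdash w^\omega\le v_{j+1}$, that is, to absorbing the remainder~$v_{j+1}$.

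Since $v_{j+1}$ is again a right-hand side for the very inequality $w^\omega\le{-}$, the natural device is to run this whole case as a secondary induction, on the lexicographic complexity $\mu$ of a fixed $\omega$-term representing the right-hand side (Lemma~\ref{l:mu-vs-decompositions}) and then on its size, realizing the factorizations above by decompositions of that term through Theorem~\ref{t:omega-term-decomposition}. When $\mu$ strictly drops along the sequence the remainder falls under the secondary hypothesis and is recombined by the product rule; the obstruction, and what I expect to be the main difficulty, is the stabilized regime in which $\mu$ never drops. There the decompositions are confined to the shapes of Lemma~\ref{l:mu-vs-decompositions}, which pins the later terms to a common trailing idempotent~$s^\omega$ and so makes the loop located by Proposition~\ref{p:repetitions} genuinely close; it is exactly at this point that the nontrivial axioms of~$\Gamma$ must be invoked, inserting the repeated block and the stabilized remainder into that idempotent context by an axiom $u^\omega\le u^\omega vu^\omega$ with $v\le u$ valid in~$\pv V$ (obtained by descending the pertinent $\pv W$-inequality), the identity $u^{\omega+1}=u^\omega$ guaranteeing agreement with the basis of~$\PolB V$. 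Transitivity and the product rule then assemble these provable inequalities into $\Gamma\vdash w^\omega\le v$, closing the induction.
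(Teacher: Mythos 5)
Your base case, your product case, and the first half of your $\omega$-power case coincide with the paper's own proof, and your reduction of the goal to $\Gamma\vdash w^\omega\le v_{j+1}$ is formally sound. But that reduction is exactly where a genuine gap opens: the new goal has the same shape as the original one ($w^\omega$ on the left, an arbitrary $\pv W$-larger $\omega$-word on the right), and the secondary induction you propose, on $\mu$ of a term representing the right-hand side, cannot close it. In the stabilized regime where $\mu$ never drops --- which you yourself flag as the main difficulty --- you assert that the axioms of $\Gamma$ finish the argument, but you never exhibit the hypothesis they require: to invoke $u^\omega\le u^\omega v'u^\omega$ you must produce a specific $\omega$-word $v'$ together with the fact that $v'\le u$ is valid in $\pv V$, whereas every inequality your construction generates via Corollary~\ref{c:lifting-omega-factorizations} points the other way, having $u$ or $w$ on the smaller side and being valid in $\pv W$. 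Moreover, every axiom of $\Gamma$ is two-sided: it inserts $v'$ \emph{between} two copies of $u^\omega$. A remainder $v_{j+1}$ sitting at the right end of $v=x_0\cdots x_{i-1}(x_i\cdots x_j)^\omega v_{j+1}$ is flanked by an $\omega$-power only on its left, so the trailing $u^\omega$ of any axiom has nowhere to go; $\Gamma$ contains no one-sided absorption $u^\omega\le u^\omega v'$, and such an inequality is in general not valid in $\PolB V$. So your recursion regenerates the same problem and does not terminate.

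The paper closes the loop with two ideas missing from your sketch. First, instead of recursing on $v_{j+1}$, it applies the \emph{left-right dual} of Proposition~\ref{p:repetitions} to factorizations of $v_{j+1}$ obtained by lifting $u=uw$ from the right, producing $v_{j+1}=v'(y_n\cdots y_m)^\omega y_{m-1}\cdots y_0$ with $u\le v'$ and all $w\le y_q$ valid in $\pv W$; now the middle factor $v'$ is flanked by $\omega$-powers on \emph{both} sides. Second, it obtains the needed $\pv V$-valid inequality by an order flip: since $u\le v'$ holds in $\pv W=\PolB V$, it holds in the pseudovariety of \emph{monoids} generated by $\pv V$, which is contained in $\pv W$ and, being unordered (closed under reversing the order), must then satisfy the equality $u=v'$; hence $v'\le u$ is valid in $\pv V$ and $u^\omega\le u^\omega v'u^\omega$ is an axiom of $\Gamma$. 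Writing the two copies of $u^\omega$ in this axiom as $\omega$-powers of the appropriate powers of $w$, multiplying on the left by $w^i$ and on the right by a suitable power of $w$ (legitimate since $u=w^iuw^m$ in $\omo AA$), and replacing each power of $w$ by the corresponding product of the $x_p$'s and $y_q$'s through the term induction hypothesis, the product rule and the $\omega$-power rule, one arrives exactly at the factorization of $v$ and concludes by transitivity. Your parenthetical ``descending the pertinent $\pv W$-inequality'' gestures at the flip, but without the dual repetition there is no $v'$ placed in a two-sided idempotent context to apply it to; supplying that step is precisely what your proposal lacks.
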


\begin{proof}
  Let $t$ be an $\omega$-term representing~$u$. We proceed by
  induction on the construction of~$t$. If $t$ is a
  letter or $1$, then $u=v$,
  and so $u=v$ belongs to~$\Gamma$.

  In case $t=t_1 \cdot t_2$, we obtain the corresponding factorization
  $u=u_1u_2$, where $t_i$ represents $u_i$ ($i=1,2$). By
  Corollary~\ref{c:lifting-omega-factorizations}, there is a
  factorization $v=v_1v_2$ in~\omo AA such that the inequality $u_i\le
  v_i$ is valid in~\pv W. The induction hypothesis yields
  $\Gamma\vdash u_i\le v_i$ ($i=1,2$) and so also $\Gamma\vdash u\le
  v$.

  Suppose next that $t=s^\omega$. Let $v_0=v$ and let $w\in\omo AA$ be
  the $\omega$-word represented by~$s$. Inductively, we apply
  Corollary~\ref{c:lifting-omega-factorizations} to the inequality
  $u\le v_i$ ($i\ge0$) and the factorization $u=wu$, to obtain a
  factorization $v_i=x_iv_{i+1}$ in~\omo AA such that the inequalities
  $w\le x_i$ and $u\le v_{i+1}$ hold in~\pv W. By
  Proposition~\ref{p:repetitions}, there exist $i,j$ such that
  $0\le i \leq j$ and
  \begin{equation}
    v=x_0\cdots x_{i-1}(x_i\cdots x_j)^\omega v_{j+1}.
    \label{eq:left-factorization}
  \end{equation}
  Similarly, by the left-right dual of Proposition~\ref{p:repetitions},
  there exists a factorization $v_{j+1}=v'(y_n\cdots y_m)^\omega
  y_{m-1}\cdots y_0$ in~\omo AA such that each of the inequalities
  $u\le v'$ and $w\le y_p$ ($p=0,\ldots,n$) holds in~\pv W. Combining
  with~\eqref{eq:left-factorization}, we obtain the factorization
  \begin{equation}
    v=x_0\cdots x_{i-1}(x_i\cdots x_j)^\omega v'
    (y_n\cdots y_m)^\omega y_{m-1}\cdots y_0. 
    \label{eq:global-factorization}
  \end{equation}
  Since the pseudovariety of monoids generated by~\pv V is contained
  in~\pv W, it must satisfy the equality $u=v'$, whence \pv V
  satisfies the inequality $v'\le u$. By the choice of~$\Gamma$, it
  follows that the inequality $u^\omega\le u^\omega v'u^\omega$
  belongs to~$\Gamma$. The preceding inequality may be written as
  \begin{equation}
    u\le (w^{j-i+1})^\omega v'(w^{m-n+1})^\omega.
    \label{eq:hypothesis}
  \end{equation}
  Since the inequality~\eqref{eq:hypothesis} belongs to~$\Gamma$ and
  $u=wu=uw$ by aperiodicity, we deduce that
  \begin{equation}
    \Gamma\vdash u\le w^i(w^{j-i+1})^\omega v'(w^{m-n+1})^\omega w^m.
    \label{eq:basic-ineq-consequence}
  \end{equation}
  On the other hand, the term induction hypothesis yields the
  relations
  \begin{equation}
    \Gamma\vdash w\le x_p \text{ and } \Gamma\vdash w\le y_q\quad
    (p=0,\ldots,j; q=0,\ldots,n).
    \label{eq:induction-inequalities}
  \end{equation}
  Combining \eqref{eq:global-factorization},
  \eqref{eq:basic-ineq-consequence}, and
  \eqref{eq:induction-inequalities}, we conclude that $\Gamma\vdash
  u\le v$, which completes the induction step and the proof.
\end{proof}

\section{Main results}
\label{sec:main}

By an \emph{ordered $\omega$-monoid} we mean an ordered monoid with a
unary operation of $\omega$-power such that $s\le t$ implies
$s^\omega\le t^\omega$. A \emph{variety} of ordered $\omega$-monoids
is a class of such structures that is closed under taking homomorphic
images, ordered $\omega$-submonoids, and arbitrary direct products. It
is well known that varieties of ordered $\omega$-monoids are defined
by $\omega$-inequalities
\cite{Burris&Sankappanavar:1981,Keisler:1977}.

For a pseudovariety \pv V of ordered monoids, we denote by $\pv
V^\omega$ the variety of ordered $\omega$-monoids generated by~\pv V.
Taking into account the basis of $\omega$-identities of $\pv A^\omega$
obtained by McCammond~\cite{McCammond:1999a},
Proposition~\ref{p:provability} yields the following basis result.

\begin{Thm}
  \label{t:omega-basis-PolBV}
  Let \pv W be a pseudovariety of aperiodic monoids and let $\pv
  V=\Pol W$. Then the variety $(\PolB V)^\omega$ of ordered
  $\omega$-monoids is defined by the following inequalities:
  \begin{enumerate}
  \item\label{item:omega-basis-PolBV-1} $x(yz)=(xy)z$,
    $x(yx)^\omega=(xy)^\omega x$;
  \item\label{item:omega-basis-PolBV-2}
    $(x^\omega)^\omega=(x^r)^\omega=xx^\omega=x^\omega x = x^\omega$
    for every $r\ge2$;
  \item\label{item:omega-basis-PolBV-3} $u^\omega\le u^\omega
    vu^\omega$ whenever the inequality $v\le u$ is valid in~\pv V.\qed
  \end{enumerate}
\end{Thm}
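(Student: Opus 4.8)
The statement is essentially a repackaging of results already established: identities~(\ref{item:omega-basis-PolBV-1}) and~(\ref{item:omega-basis-PolBV-2}) are McCammond's basis for $\pv A^\omega$ \cite{McCammond:1999a}, while the order information is carried by~(\ref{item:omega-basis-PolBV-3}). Writing \Cl E for the variety of ordered $\omega$-monoids defined by~(\ref{item:omega-basis-PolBV-1})--(\ref{item:omega-basis-PolBV-3}), the plan is to prove the two inclusions $(\PolB V)^\omega\subseteq\Cl E$ and $\Cl E\subseteq(\PolB V)^\omega$. Since varieties of ordered $\omega$-monoids are axiomatized by $\omega$-inequalities, and since a class and the variety it generates satisfy exactly the same $\omega$-inequalities, this amounts to showing that an $\omega$-inequality is valid in $\PolB V$ if and only if it is valid in~\Cl E.

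For the inclusion $(\PolB V)^\omega\subseteq\Cl E$ I would simply check that every member of $\PolB V$ satisfies the listed inequalities, so that the generated variety does too. The identities~(\ref{item:omega-basis-PolBV-1}) and~(\ref{item:omega-basis-PolBV-2}) hold because $\PolB V$ consists of aperiodic monoids. For~(\ref{item:omega-basis-PolBV-3}) I would invoke Proposition~\ref{p:basis-PolB}: in an aperiodic monoid one has $u^{\omega+1}=u^\omega$, so the inequality $u^{\omega+1}\le u^\omega vu^\omega$ provided there is equivalent to $u^\omega\le u^\omega vu^\omega$, and restricting the pseudowords $u,v\in\Om AM$ occurring there to $\omega$-words yields precisely the inequalities of~(\ref{item:omega-basis-PolBV-3}), which therefore hold in $\PolB V$ as a special case.

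The essential content is the reverse inclusion, for which the analytic work has already been done in Proposition~\ref{p:provability}. The key observation is that the four derivation rules of the calculus of Section~\ref{sec:re-positive-cases} are exactly the closure properties valid in every variety of ordered $\omega$-monoids: the product rule is compatibility of the order with multiplication, the $\omega$-power rule is the defining monotonicity $s\le t\Rightarrow s^\omega\le t^\omega$, and the last rule is transitivity of the order. Moreover, the members of the set $\Gamma$ of Proposition~\ref{p:provability} are precisely the trivial inequalities together with the instances of~(\ref{item:omega-basis-PolBV-3}), hence all hold in~\Cl E. An induction on the length of a formal proof then shows that $\Gamma\vdash u\le v$ implies that $u\le v$ is valid in~\Cl E. Combining with Proposition~\ref{p:provability}, any $\omega$-inequality valid in $\PolB V$ satisfies $\Gamma\vdash u\le v$ and is therefore valid in~\Cl E, giving $\Cl E\subseteq(\PolB V)^\omega$.

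The main obstacle is not analytic but a matter of bookkeeping: one must reconcile the three guises of the defining inequalities — between $\omega$-terms, between $\omega$-words of $\omo AA$, and between pseudowords of $\Om AM$ — and verify that each passage is harmless. This is handled by the aperiodic collapse $u^{\omega+1}=u^\omega$, which matches Proposition~\ref{p:basis-PolB} with~(\ref{item:omega-basis-PolBV-3}), and by the observation that $\Cl E\subseteq\pv A^\omega$, so that any two $\omega$-terms representing the same $\omega$-word are identified in~\Cl E and the inequalities are well defined independently of the chosen representatives.
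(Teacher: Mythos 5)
Your proposal is correct and takes essentially the same route as the paper: validity of items (1)--(2) via aperiodicity, validity of item (3) via Proposition~\ref{p:basis-PolB}, and the completeness direction via Proposition~\ref{p:provability}. The soundness induction over the proof calculus and the McCammond-based reconciliation of $\omega$-terms with $\omega$-words, which you spell out, are exactly what the paper's terse proof leaves implicit (signalled by the remark preceding the theorem statement).
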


\begin{proof}
  The equalities in~\eqref{item:omega-basis-PolBV-1}
  and~\eqref{item:omega-basis-PolBV-2} hold in the variety~$\pv
  A^\omega$, whence also in the subvariety~$(\PolB V)^\omega$. The
  inequalities in~\eqref{item:omega-basis-PolBV-3} hold in~$(\PolB
  V)^\omega$ by Proposition~\ref{p:basis-PolB}. That every
  inequality valid in~$(\PolB V)^\omega$ is a consequence of the
  inequalities
  \eqref{item:omega-basis-PolBV-1}--\eqref{item:omega-basis-PolBV-3}
  follows from Proposition~\ref{p:provability}.
\end{proof}

The following is the announced recursive enumerability of the
$\omega$-identity problem for suitable pseudovarieties of ordered
monoids. It is formulated as a transfer result of that property along
the Boolean-polynomial closure.

\begin{Thm}
  \label{t:main-tool}
  Let \pv W be a pseudovariety of aperiodic monoids and let $\pv
  V=\Pol W$. If the $\omega$-inequality problem for \pv V is
  recursively enumerable, then so is it for~$\PolB V$.
\end{Thm}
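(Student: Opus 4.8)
The plan is to leverage the completeness result of Proposition~\ref{p:provability}, reducing the whole statement to the recursive enumerability of a suitable set of axioms together with the soundness of the associated deductive calculus, and to use Proposition~\ref{p:raising-ineq-prob} to pass to problems over~$\pv A$.

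First I would reduce to the $\omega$-inequality problem over~$\pv A$. Since $\pv V=\Pol W$ with $\pv W$ aperiodic, both $\pv V$ and $\PolB V$ are aperiodic, so $\PolB V\subseteq\pv A$; as the $\omega$-equality problem for~$\pv A$ is decidable, Proposition~\ref{p:raising-ineq-prob}, applied with~$\pv A$ in the role of the ambient pseudovariety, reduces the claim to showing that the set of pairs $(u,v)$ of $\omega$-words from~$\omo AA$ with $u\le v$ valid in $\PolB V$ is recursively enumerable.

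Next I would introduce the set $\Gamma$ of Proposition~\ref{p:provability} and observe that it is recursively enumerable. The trivial $\omega$-inequalities are plainly enumerable, while for the inequalities $u^\omega\le u^\omega vu^\omega$ I would invoke the hypothesis: the positive cases of the $\omega$-inequality problem for~$\pv V$ let me enumerate exactly the pairs $(u,v)$ with $v\le u$ valid in~$\pv V$ (freely identifying $\omega$-terms with the $\omega$-words of~$\omo AA$ they represent, using decidability of $\omega$-equality for~$\pv A$), and for each such pair I output $u^\omega\le u^\omega vu^\omega$. Since $\Gamma$ is recursively enumerable and the four inference rules defining provability are effective, enumerating all finite formal proofs shows that the set $\{(u,v):\Gamma\vdash u\le v\}$ is recursively enumerable.

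It then remains to verify that provability from~$\Gamma$ captures validity in $\PolB V$ \emph{exactly}. Completeness, that validity implies provability, is precisely Proposition~\ref{p:provability}. For soundness I would check that every axiom of~$\Gamma$ holds in $\PolB V$ --- the trivial ones obviously, and $u^\omega\le u^\omega vu^\omega$ because aperiodicity gives $u^{\omega}=u^{\omega+1}$ in $\PolB V$, turning it into a defining inequality of Proposition~\ref{p:basis-PolB} --- and that each inference rule preserves validity in $\PolB V$: the product rule follows from compatibility of the order with multiplication, the chaining rule from transitivity of~$\le$, and the $\omega$-power rule from the fact that in a finite ordered monoid $s\le t$ forces $s^\omega=s^{n!}\le t^{n!}=t^\omega$ for $n$ its cardinality. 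Consequently the positive cases of the $\omega$-inequality problem for $\PolB V$ over~$\pv A$ coincide with the recursively enumerable provable set, and the theorem follows from the first step. The main obstacle here is not a hard computation but the need for soundness in addition to the already-established completeness: only then does enumerating the provable inequalities enumerate precisely the positive cases rather than a possibly larger set.
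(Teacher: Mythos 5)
Your proposal is correct and follows essentially the same route as the paper's proof: take the set $\Gamma$ of Proposition~\ref{p:provability}, observe it is recursively enumerable by the hypothesis on~$\pv V$, combine completeness (Proposition~\ref{p:provability}) with soundness (via Proposition~\ref{p:basis-PolB}) so that the provable inequalities over~$\pv A$ are exactly those valid in $\PolB V$, and finish with Proposition~\ref{p:raising-ineq-prob}. The only difference is cosmetic: you spell out the soundness of the axioms (using aperiodicity to rewrite $u^\omega$ as $u^{\omega+1}$) and of the inference rules, which the paper compresses into the single remark that ``the converse follows from Proposition~\ref{p:basis-PolB}.''
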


\begin{proof}
  Consider the set $\Gamma$ defined in the statement of
  Proposition~\ref{p:provability}. By Proposition~\ref{p:provability},
  each $\omega$-inequality over~\pv A valid in $\pv W=\PolB V$ is
  provable from~$\Gamma$. Since the converse follows from
  Proposition~\ref{p:basis-PolB}, we conclude that the
  $\omega$-inequalities over~\pv A that are valid in~\pv W are
  precisely those that are provable from~$\Gamma$. As $\Gamma$ is
  recursively enumerable by hypothesis, the inequalities that are
  provable from~$\Gamma$ constitute a recursively enumerable set.
  Hence the $\omega$-inequality problem for~\pv W over~\pv A is
  recursively enumerable, and therefore the $\omega$-inequality
  problem for~\pv W is also recursively enumerable by
  Proposition~\ref{p:raising-ineq-prob}.
\end{proof}

Note that, from the definitions, it follows immediately that, if \pv V
is a recursively enumerable pseudovariety of ordered monoids, then so
is \PolB V.
Combining Theorem~\ref{t:main-tool} with
Proposition~\ref{p:re-negative-cases}, we obtain the following main
theorem of this paper.

\begin{Thm}
  \label{t:main}
  Let \pv W be a recursively enumerable pseudovariety of aperiodic
  monoids and let $\pv V=\Pol W$. If the $\omega$-inequality
  problem for \pv V is decidable, then so is it for~$\PolB V$.\qed
\end{Thm}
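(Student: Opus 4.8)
The plan is to use that a decision problem is decidable exactly when both its positive and its negative instances form recursively enumerable sets. Accordingly, I would show that the valid $\omega$-inequalities over $\PolB V$ and the invalid ones each constitute a recursively enumerable set; a decision procedure then runs the two enumerations in parallel on an input $(u,v)$ and halts as soon as $(u,v)$ is produced by one of them. Since every instance is output by exactly one of the two enumerations, this decides the problem. All the substantive work is already contained in Theorem~\ref{t:main-tool} and Proposition~\ref{p:re-negative-cases}, so the remaining task is simply to verify that their hypotheses hold.

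For the valid inequalities, decidability of the $\omega$-inequality problem for $\pv V$ implies in particular that this problem is recursively enumerable. As $\pv V=\Pol W$ with $\pv W$ a pseudovariety of aperiodic monoids, Theorem~\ref{t:main-tool} applies and yields that the $\omega$-inequality problem for $\PolB V$ is recursively enumerable. This is the sole use of the decidability hypothesis on $\pv V$, and it enters only through the fact that the problem is therefore recursively enumerable.

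For the invalid inequalities, I would apply Proposition~\ref{p:re-negative-cases} to $\PolB V$, obtaining that the $\omega$-inequality problem for $\PolB V$ is co-recursively enumerable, provided $\PolB V$ is a recursively enumerable pseudovariety of ordered monoids. By the remark preceding the statement, $\PolB V$ is recursively enumerable as soon as $\pv V=\Pol W$ is; and the recursive enumerability of $\Pol W$ follows from that of $\pv W$ by the same elementary argument, since $\PolL W$ is obtained from $\Cl W$ by forming finite unions of marked products $L_0a_1L_1\cdots a_nL_n$ with $L_i\in\Cl W$, a construction that plainly preserves recursive enumerability of the underlying language class. Combining this co-recursive enumerability with the recursive enumerability established above yields decidability. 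The only point requiring care is the bookkeeping that $\PolB V$ is recursively enumerable as a pseudovariety; I expect no genuine mathematical obstacle here, as the entire substantive content has been absorbed into the two cited results.
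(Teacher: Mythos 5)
Your proposal is correct and follows exactly the paper's own route: the paper obtains this theorem by combining Theorem~\ref{t:main-tool} (recursive enumerability of the valid $\omega$-inequalities for $\PolB V$) with Proposition~\ref{p:re-negative-cases} (co-recursive enumerability, via the remark that recursive enumerability of pseudovarieties passes to the Boolean-polynomial closure), and then running the two enumerations in parallel. Your explicit check that $\Pol W$ inherits recursive enumerability from $\pv W$ via marked products merely fills in a step the paper dismisses as immediate ``from the definitions,'' so the two arguments coincide.
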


In particular, this leads to the following application.

\begin{Cor}
  \label{c:main1}
  Let \pv V be a recursively enumerable pseudovariety of aperiodic
  monoids such that the $\omega$-inequality problem is decidable for
  \Pol V. Then, the $\omega$-inequality problem is decidable for every
  level and half level of the concatenation hierarchy starting
  from~\pv V.\qed
\end{Cor}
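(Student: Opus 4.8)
The plan is to run an induction up the concatenation hierarchy, using Theorem~\ref{t:main} to advance between successive half levels and a routine reduction to pass from a half level to the integer level it generates. Write $\pv V_0=\pv V$ and, for $k\ge0$, let $\pv V_{k+1}$ be the pseudovariety of monoids generated by $\Pol{V_k}$; thus the half levels are the ordered pseudovarieties $\Pol{V_k}$ and the integer levels are the monoid pseudovarieties $\pv V_k$. Before starting the induction I would record two structural facts that propagate along the hierarchy. First, every level is aperiodic: since $\pv V\subseteq\pv A$ and the star-free languages are closed under Boolean operations and concatenation product, each $\Pol{V_k}$ and each $\pv V_k$ is again contained in~$\pv A$. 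Second, recursive enumerability propagates: $\pv V$ is recursively enumerable by hypothesis, and both the polynomial closure and the passage to the generated pseudovariety of monoids preserve recursive enumerability (the latter being noted in the excerpt for $\PolB V$). Hence each $\pv V_k$ is a recursively enumerable pseudovariety of aperiodic monoids.

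With this in place, the induction on half levels is immediate. The base case is the hypothesis that the $\omega$-inequality problem for $\Pol{V_0}=\Pol V$ is decidable. For the inductive step I would apply Theorem~\ref{t:main} with $\pv W=\pv V_k$, so that $\Pol{V_k}$ plays the role of the pseudovariety called $\pv V$ there; since $\pv V_k$ is recursively enumerable and aperiodic, the theorem's hypotheses are met, and its conclusion gives decidability of the $\omega$-inequality problem for $\mathrm{Pol}\,\mathrm B\,(\Pol{V_k})$. As the pseudovariety of monoids generated by $\Pol{V_k}$ is exactly $\pv V_{k+1}$, this target equals the next half level $\Pol{V_{k+1}}$. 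Thus decidability transfers from $\Pol{V_k}$ to $\Pol{V_{k+1}}$, and by induction every half level has decidable $\omega$-inequality problem.

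It remains to obtain the integer levels. For $k\ge0$ the level $\pv V_{k+1}$ is a pseudovariety of monoids, so its $\omega$-inequality problem coincides with its $\omega$-equality problem. I would reduce the latter to the half level $\Pol{V_k}$: an $\omega$-equality $u=v$ holds in $\pv V_{k+1}$ exactly when $\varphi(u)=\varphi(v)$ for every continuous homomorphism into an underlying monoid of a member of $\Pol{V_k}$, and by antisymmetry of the order this is equivalent to both $\omega$-inequalities $u\le v$ and $v\le u$ being valid in $\Pol{V_k}$. Since the half-level problem is decidable, so is each integer level with $k\ge0$. The seed $\pv V=\pv V_0$ is the one case not covered by this reduction; there the $\omega$-inequality problem is again the $\omega$-equality problem, decidable in the situations of interest (trivially so when $\pv V$ is the trivial pseudovariety, as in the Straubing--Th\'erien hierarchy).

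The substantive content is entirely carried by Theorem~\ref{t:main}; what requires care here is only the bookkeeping, namely checking that the aperiodicity and recursive-enumerability hypotheses are preserved at each stage so that Theorem~\ref{t:main} applies at every step, and correctly matching the hierarchy's inductive step $\Pol{V_k}\mapsto\Pol{V_{k+1}}$ to the operation $\pv V\mapsto\PolB V$ appearing in that theorem. I expect the only genuinely delicate point to be this identification, together with the observation that the integer levels, being monoid pseudovarieties, contribute no new difficulty beyond testing the two opposite inequalities at the preceding half level.
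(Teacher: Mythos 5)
Your proposal is correct and takes essentially the same approach as the paper, which states this corollary without proof as an immediate consequence of Theorem~\ref{t:main} (together with the remark made just before it that recursive enumerability is preserved); your induction on half levels, identifying the Boolean-polynomial closure of $\Pol{V_k}$ with $\Pol{V_{k+1}}$ and reducing each integer level to the two opposite inequalities at the preceding half level, is exactly that intended argument made explicit. Your caveat about the seed $\pv V$ is harmless under the paper's conventions, since its definition counts as levels only the pseudovarieties obtained by forgetting the order and taking homomorphic images, so the base $\pv V$ itself is not among the levels and half levels the corollary quantifies over (and in the main application it is the trivial pseudovariety, where the problem is trivially decidable).
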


To obtain our main application, we need to be able to start the
induction process with a pseudovariety of the form \Pol W for which
the $\omega$-inequality problem is decidable. For the
Straubing-Th\'erien hierarchy, we should start with \pv W as the
trivial pseudovariety, for which \Pol W is the pseudovariety of
ordered monoids defined by the inequality $1\le x$
\cite[Proposition~8.4]{Pin&Weil:1994c}, and sometimes and henceforth
denoted $\pv J^+$. The join with its dual, defined by the inequality
$1\ge x$, is the pseudovariety \pv J of all finite \Cl J-trivial
monoids. The $\omega$-equality problem for~\pv J has been solved by
the first author~\cite{Almeida:1990b}. The solution consists in
writing $\omega$-words in a canonical form, namely products of letters
and $\omega$-powers of words that are products of distinct letters in
increasing order (assuming a total order on the alphabet), in such a
way that any factor adjacent to an $\omega$-power $u^\omega$ has at
least one letter that does not appear in~$u$.

Given a word $w=a_1\cdots a_n\in A^*$, with $a_i\in A$, we denote by
$w{\uparrow}$ the language $A^*a_1A^*\cdots a_nA^*$, consisting of all
words that admit $w$ as a subword. We also say that $w$ is
a \emph{subword} of a pseudoword $u\in\Om AM$ if
$u\in\overline{w{\uparrow}}$. If $u\in\omo AM$ is an $\omega$-word
in canonical form over~\pv J then $w$ is a subword of~$u$
if and only if it is a subword of $u^{(k)}$ for some positive integer
$k$, where $u^{(k)}$ is obtained from $u$ by replacing each exponent
$\omega$ by~$k$ \cite[Lemma~8.2.3]{Almeida:1994a}, or equivalently,
if $w$ is a subword of some word from the language $u^{(*)}$ defined
by the regular expression obtained from $u$ by replacing each exponent
$\omega$ by~$*$.

\begin{Prop}
  \label{p:V1/2}
  The $\omega$-inequality problem for~$\pv J^+$ is decidable.
\end{Prop}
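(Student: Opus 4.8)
The plan is to reduce the validity of $u\le v$ in~$\pv J^+$ to an inclusion between two effectively computable regular languages, namely the sets of finite subwords of the $\omega$-words $u$ and~$v$.

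First I would determine the $\pv J^+$-recognizable languages. Since $\pv J^+$ is the polynomial closure of the trivial pseudovariety, whose associated positive variety assigns to each alphabet only the languages $\emptyset$ and~$A^*$, its polynomial closure consists precisely of the finite unions of languages $A^*a_1A^*\cdots a_nA^*=w{\uparrow}$, with $w=a_1\cdots a_n$; in other words, the $\pv J^+$-recognizable languages over~$A$ are exactly the finite unions of languages $w{\uparrow}$, that is, the upward-closed subsets of~$A^*$ for the subword ordering. Combining this with Proposition~\ref{p:order-vs-languages} and the definition of subword (namely $u\in\overline{w{\uparrow}}$), I would then show that $u\le v$ holds in~$\pv J^+$ if and only if every finite subword of~$u$ is a subword of~$v$. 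Indeed, every $\pv J^+$-recognizable language is a finite union $\bigcup_i w_i{\uparrow}$, so $u\in\overline{L}$ amounts to some $w_i$ being a subword of~$u$; hence the implication $u\in\overline{L}\Rightarrow v\in\overline{L}$ holding for all such~$L$ is equivalent to the inclusion of subword sets, the nontrivial direction being witnessed, when the inclusion fails, by $L=w{\uparrow}$ for a subword $w$ of~$u$ that is not a subword of~$v$.

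It then remains to compute, from an $\omega$-term $t$ representing~$u$, the set of finite subwords of~$u$ as a regular language. Because the language $w{\uparrow}$ is $\pv J$-recognizable, the relation $u\in\overline{w{\uparrow}}$ depends only on the image of~$u$ under the canonical projection $\Om AM\to\Om AJ$; thus the subword set of~$u$ coincides with that of its canonical form over~$\pv J$, which can be computed from~$t$. By the description of subwords of $\omega$-words in canonical form recalled before the statement, $w$ is a subword of~$u$ if and only if $w$ is a subword of some word in the regular language $u^{(*)}$ obtained by replacing every exponent~$\omega$ by~$*$. Consequently the set of subwords of~$u$ is the downward closure of the regular language~$u^{(*)}$ for the subword ordering, which is again regular and effectively computable.

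Finally, the inclusion of subword sets is a decidable inclusion of regular languages, so the $\omega$-inequality problem for~$\pv J^+$ is decidable. The one delicate point is the interface, in the penultimate step, between the profinite limit defining the $\omega$-word~$u$ and the finitary computation of its subwords; it is handled by the facts that $\overline{w{\uparrow}}$ is clopen and that increasing the exponents only enlarges the set of subwords, which together allow one to replace the approximating sequence of words by the single regular language~$u^{(*)}$.
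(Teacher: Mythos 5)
Your proposal is correct and follows essentially the same route as the paper's proof: characterize the $\pv J^+$-recognizable languages as finite unions of languages $w{\uparrow}$, apply Proposition~\ref{p:order-vs-languages} to reduce validity of $u\le v$ to the inclusion of subword sets, pass to canonical forms over~$\pv J$, and decide the resulting inclusion of the regular languages of subwords of $u^{(*)}$ and $v^{(*)}$. Your extra justifications (why subwords only depend on the image in $\Om AJ$, and why the equivalence with subword inclusion holds) are correct elaborations of steps the paper treats more tersely.
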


\begin{proof}
  Let $u\le v$ be an $\omega$-inequality. Since the transformation of
  an $\omega$-word to its canonical form over \pv J is effective,
  without loss of
  generality we may assume that $u$ and $v$ are in canonical form.

  Since every $\pv J^+$-recognizable language is a finite union of
  languages of the form $w{\uparrow}$, we conclude from
  Proposition~\ref{p:order-vs-languages} that $\pv J^+$ satisfies the
  inequality $u\le v$ if and only if every subword of $u$ is also a
  subword of~$v$. In view of the observation preceding the statement
  of this proposition, the latter condition can be decided by checking
  whether the regular language consisting of all subwords of $u^{(*)}$
  is a subset of the regular language of subwords of $v^{(*)}$.
\end{proof}

Finally, here is the main application of our results.

\begin{Cor}
  \label{c:main2}
  The $\omega$-inequality problem is decidable for all members of
  the Straubing-Th\'erien hierarchy.\qed
\end{Cor}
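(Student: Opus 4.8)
The plan is to derive this corollary as a direct application of the inductive transfer result Corollary~\ref{c:main1}, taking as base pseudovariety the trivial pseudovariety~$\pv I$ of singleton monoids. By definition, the Straubing-Th\'erien hierarchy is the concatenation hierarchy based on~$\pv I$, so its members are exactly the levels and half levels of that hierarchy. Hence, once the hypotheses of Corollary~\ref{c:main1} are checked for $\pv V=\pv I$, its conclusion states precisely that the $\omega$-inequality problem is decidable for every member of the Straubing-Th\'erien hierarchy, which is what we want.

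To apply Corollary~\ref{c:main1} with $\pv V=\pv I$, I would verify its two hypotheses. First, $\pv I$ is a recursively enumerable pseudovariety of aperiodic monoids: up to isomorphism its only member is the one-element monoid, which is aperiodic and can certainly be enumerated by a Turing machine. Second, the $\omega$-inequality problem must be decidable for $\Pol I$. Here I would invoke the identification, recalled just before Proposition~\ref{p:V1/2} and attributed to Pin and Weil, of $\Pol I$ with the pseudovariety $\pv J^+$ of ordered monoids defined by the inequality $1\le x$; the required decidability is then exactly Proposition~\ref{p:V1/2}. With both hypotheses established, Corollary~\ref{c:main1} yields the claim, and no further argument is needed.

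I expect no real obstacle in the proof of this corollary itself, precisely because all the substance has been absorbed into the earlier results. The base case of the induction is Proposition~\ref{p:V1/2}, which depends on the canonical form for $\omega$-words over~$\pv J$ and on characterising validity of $u\le v$ in~$\pv J^+$ by comparison of sets of subwords. The inductive step is Corollary~\ref{c:main1}, resting on Theorem~\ref{t:main} (hence on the completeness result Proposition~\ref{p:provability}) together with the co-enumerability provided by Proposition~\ref{p:re-negative-cases}. The only care needed in the present proof is to name the base pseudovariety correctly and to cite the identification $\Pol I=\pv J^+$, so that the induction is actually started by Proposition~\ref{p:V1/2} rather than merely assumed.
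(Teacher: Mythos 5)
Your proposal is correct and coincides with the paper's own (implicit) argument: instantiate Corollary~\ref{c:main1} at the trivial pseudovariety, whose hypotheses are met because the trivial pseudovariety is recursively enumerable and aperiodic and because, by the identification $\Pol{}$ of the trivial pseudovariety with $\pv J^+$ recalled before Proposition~\ref{p:V1/2}, that proposition supplies the decidability of the $\omega$-inequality problem at the base half level. Nothing further is needed, which is exactly why the paper states the corollary with only a \qed.
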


\subsection*{Acknowledgments}

The first author was partially supported by CMUP (UID/MAT/00144/2013),
which is funded by FCT (Portugal) with national (MCTES) and European
structural funds (FEDER), under the partnership agreement PT2020.

The second and third authors were partially supported by the Grant
15-02862S of the Grant Agency of the Czech Republic.

\bibliographystyle{amsplain}

\providecommand{\bysame}{\leavevmode\hbox to3em{\hrulefill}\thinspace}
\providecommand{\MR}{\relax\ifhmode\unskip\space\fi MR }
\providecommand{\MRhref}[2]{%
  \href{http://www.ams.org/mathscinet-getitem?mr=#1}{#2}
}
\providecommand{\href}[2]{#2}

\end{document}